\newcommand{\R}{\mathbb R}
\newcommand{\C}{\mathbb C}
\newcommand{\Z}{\mathbb Z}
\newcommand{\Hb}{\mathbb H}
\newcommand{\T}{\mathbb T}
\newtheorem{theorem}{Theorem}[section]
\newtheorem{corollary}[theorem]{Corollary}
\newtheorem{lemma}[theorem]{Lemma}
\newtheorem{proposition}[theorem]{Proposition}
\theoremstyle{definition}
\newtheorem{definition}[theorem]{Definitions}
\theoremstyle{remark}
\newtheorem{rem}[theorem]{Remark}
\numberwithin{equation}{section}
\renewcommand*{\eqref}[1]{%
  \hyperref[{#1}]{\textup{\tagform@{\ref*{#1}}}}%
}
\title[Hermite Multipliers on Modulation Spaces]
{Hermite multipliers on Modulation Spaces}      
\author[D. G. Bhimani]{Divyang G. Bhimani}
\address[D. G. Bhimani]{Centre for Applicable Mathematics (CAM)\\
Tata Institute of Fundamental Research\\
560012 Bangalore, India}
\email{divyang@tifrbng.res.in}
\author[R. Balhara]{Rakesh Balhara}
\address[R. Balhara]{Department of Mathematics\\
 Indian Institute of Science\\
560 012 Bangalore, India}
\email{rakeshbalhara@gmail.com}
\author[S. Thangavelu]{Sundaram Thangavelu}
\address[S. Thangavelu]{Department of Mathematics\\
 Indian Institute of Science\\
560 012 Bangalore, India}
\email{veluma@math.iisc.ernet.in}
\subjclass[2010]{Primary: 42B15, 42B35,  Secondary:  35L05, 35Q55}
\keywords{Hermite multipliers, modulation spaces, wave and Schr\"odinger equations}
\begin{document}
\begin{abstract}
We study multipliers associated to the Hermite operator $H=-\Delta + |x|^2$  on modulation spaces $M^{p,q}(\mathbb R^d)$. We prove that the operator $m(H)$ is bounded on $M^{p,q}(\mathbb R^d)$ under standard  conditions on $m,$ for suitable choice of $p$ and $q$.  As an application, we point out  that the solutions to the free  wave  and Schr\"odinger equations associated to $H$  with initial data in a modulation space will remain in the same modulation space for all times.  We also point out that Riesz transforms associated to $H$ are bounded on some modulation spaces.
\end{abstract} 
\maketitle

\section{Introduction}
The main aim of this article is to study the boundedness properties of Hermite multipliers on modulation spaces. We quickly recall the setup in order to state our results and we refer to Section \ref{p} for details.  The spectral decomposition of the Hermite operator $ H = -\Delta +|x|^2 $ on $ \mathbb R^d $ is given by $ H = \sum_{k=0}^\infty (2k+d) P_k $ where $ P_k $ stands for the orthogonal projectionof $ L^2(\mathbb R^d) $ onto the eigenspace corresponding to the eigenvalue $(2k+d)$. Given a bounded function $m$ defined on the set of all natural numbers, we define the operator $ m(H) $ simply by setting
$ m(H) = \sum_{k=0}^\infty m(2k+d) P_k.$  We say that $ m $ is an $ L^p $ multiplier for the Hermite expansions if $ m(H) $ extends to $ L^p $ as a bounded operator. Sufficient conditions on $ m $ are known so that $ m $ is an $L^p $ multiplier, see e.g.\cite{MRS,MS,HWJC,HW}. In this article we are interested in multipliers $ m $ which define bounded operators on the modulation spaces $ M^{p,q}(\mathbb R^d) $.

Recall that a tempered distribution $ f $ on $ \R^d $ belongs to the modulation space $ M^{q,p}(\mathbb R^d)$ if the Fourier-Wigner transform of $ f $ and the Gaussian $ \Phi_0(\xi) = \pi^{-d/2} e^{-\frac{1}{2}|\xi|^2} $ defined by 
$$ \langle \pi(x+iy)f,\Phi_{0}\rangle  = \int_{\mathbb R^d} e^{i(x \cdot \xi+\frac{1}{2}x\cdot y)} f(\xi+y)\Phi_0(\xi) d\xi $$
belongs to the mixed norm space $ L^p(\mathbb R^d, L^q(\mathbb R^d))$ consisting of functions $F(x,y)$ for which the norms 
$$ \|F\|_{L^{p,q}}  = \left( \int_{\mathbb R^d} \|F(x, \cdot)\|_q^p dx \right)^{1/p} $$ are finite (See Section \ref{MW} below). These spaces have several interesting properties not shared by the $ L^p $ spaces. For example, $M^{p,p}(\mathbb R^d)$ are invariant under the Fourier transform and  $ M^{q,1}(\mathbb R^d)$  are algebras under pointwise multiplication.

For the multiplier operators  $m(D)$, the problem of establishing sufficient conditions on $m$ that make the operator $m(D)$ bounded on  $L^{p}$ has a long history. As it appears often in various applications, like solving linear dispersive  PDE, e.g., wave/Schr\"odinger equations, 
for more detail, we refer to \cite{WB, rsw, gro2, MS} and the reference therein.  It is well known that the   operator (See Definition \ref{fme} below) with Fourier multiplier $e^{i|\xi|^{\alpha}} (\alpha>2)$ is bounded on $L^{p}(\mathbb R^d)$ if and only if  $p=2.$

The study of  Fourier multiplier operators, which are of the form $ m( -\Delta) $ in the context of modulation space $M^{2,1}(\mathbb R^d)$ was initiated in the works of  Wang-Zhao-Guo \cite{WZG}.  
In fact, in the consequent year  B\'enyi-Gr\"{o}chenig-Okoudjou-Rogers \cite{gro2}   have shown  that  the Fourier multiplier operator with multiplier  $e^{i|\xi|^{\alpha}}  (\alpha \in [0,2])$  is bounded on  $M^{p,q}(\mathbb R^d)$ for all  $1\leq p,q \leq \infty.$ The cases $\alpha =1$ and $\alpha =2$ are particularly interesting and have been studied intensively in PDE, because they occur in the time evolution of the wave equation ($\alpha =1$) and the free Schr\"odinger operator $(\alpha =2).$  Thus, the Sch\"odinger and wave propagators are not  $L^p (p\neq 2)$-bounded but  $M^{p,q}$- bounded for all $1\leq p,q \leq \infty.$  In fact, this  leads to  fixed-time estimates for Sch\"odinger and wave propagators and some of their applications to well-posedness results on modulation spaces $M^{p,q}(\mathbb R^d).$   Modulation spaces  have turned out to be very fruitful  in numerous applications in various problems in analysis and PDE.  And yet there has been  a lot of ongoing interest in these spaces  from the harmonic analysis and PDE points of view.  We refer to the recent survey  \cite{rsw} and the references therein.

Coming back to the Hermite operator,  we note that  Thangavelu \cite{ST1}  (See also \cite[Theorem 4.2.1]{ST})  has  proved an analogue of the  H\"ormander-Mikhlin type multiplier theorem for Hermite expansions on $L^p(\mathbb R^d).$ Specifically, he showed that under certain conditions on $m$, the operator $m(H)$ is bounded on $L^{p}(\mathbb R^d) (1<p< \infty).$ It is well known that the harmonic oscillator  $H=-\Delta + |x|^2$ appears in  various applications. We refer to the recent article \cite{cj} and the reference therein for details.

Taking all these considerations into  account,  we are motivated  to study Hermite multipliers  $m(H)$  on modulation spaces $M^{p,q}(\mathbb R^d).$ The conditions we impose on the multiplier $ m $ is the standard one in terms of local Sobolev spaces. We assume that $ m $ is defined on the whole of $ \mathbb R .$
Let $0\neq \psi \in C_{0}^{\infty} (\mathbb R^{+})$ be a fixed cut-off function with support contained in the interval $[\frac{1}{2}, 1],$ and define the scale invariant localized Sobolev norm of order $\beta$ of $m\in L^{\infty}(\mathbb R^+)$ by 
$$\|m\|_{L^{2}_{\beta, sloc}}= \sup_{t>0} \|\psi m(t \cdot)\|_{L^{2}_{\beta}}.$$
We then have the following theorem.

\begin{theorem}\label{FT} 
 Let $1< p\leq q  \leq 2$ or   $2\leq q \leq p < \infty.$
  Suppose that $\|m\|_{L^{2}_{\beta, sloc }} < \infty$ for some  $\beta > (2d+1)/2.$  Then the operator $m(H)$ is bounded on $M^{q,p}(\mathbb R^d)$.
\end{theorem}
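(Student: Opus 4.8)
The plan is to transfer the problem from the Hermite operator $H$ on $\R^d$ to the twisted Laplacian (special Hermite operator) $L$ on $\R^{2d}$ via the Fourier--Wigner transform, and then to prove a mixed-norm multiplier theorem for $L$. Write $W(f,g)(x,y)=\langle \pi(x+iy)f,g\rangle$ for the Fourier--Wigner transform, so that the special Hermite functions $\Phi_{\alpha\beta}=W(\Phi_\alpha,\Phi_\beta)$ are eigenfunctions of the \emph{twisted Laplacian} $L$ with $L\Phi_{\alpha\beta}=(2|\alpha|+d)\Phi_{\alpha\beta}$, matching $H\Phi_\alpha=(2|\alpha|+d)\Phi_\alpha$. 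Since $H$ and $L$ carry the same spectrum $\{2k+d\}$ and $W(Hf,g)=L\,W(f,g)$, spectral calculus yields $W(m(H)f,g)=m(L)\,W(f,g)$ for every admissible $m$ and every window $g$. Taking $g=\Phi_0$ and recalling that $\|f\|_{M^{q,p}}=\|W(f,\Phi_0)\|_{L^{p,q}}$, I get $\|m(H)f\|_{M^{q,p}}=\|m(L)\,[W(f,\Phi_0)]\|_{L^{p,q}}$. Hence it suffices to prove that $m(L)$ is bounded on the mixed-norm Lebesgue space $L^{p,q}=L^p_x(L^q_y)(\R^{2d})$ for the stated ranges; this statement no longer refers to the modulation space at all.

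Next I would realize $m(L)$ as a twisted convolution operator, $m(L)F=F\times k_m$, where $k_m=\sum_k m(2k+d)\,c_k\,\varphi_k$ and the Laguerre functions $\varphi_k$ are the ones whose twisted convolution implements the spectral projections of $L$; equivalently the Weyl transform satisfies $W(k_m)=m(H)$. The localized Sobolev hypothesis $\|m\|_{L^2_{\beta,sloc}}<\infty$ with $\beta>(2d+1)/2$ is precisely the condition that furnishes the quantitative size and decay estimates on $k_m$ needed below; this is the same threshold appearing in the $L^p(\R^d)$ Hermite multiplier theorem of Thangavelu \cite{ST1,ST}, and the $2d+1$ reflects the underlying Euclidean dimension of the Heisenberg group, on which the kernel effectively lives through the central-variable structure of twisted convolution.

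The main obstacle is exactly this mixed-norm bound for $m(L)$. The kernel $k_m$ produced by the Sobolev condition is in general not integrable, so the crude pointwise domination $|F\times k_m|\le |F|*|k_m|$ followed by convolution against an $L^1$ kernel is unavailable; one must genuinely exploit the oscillation of the twist together with the kernel estimates. To organize the argument I would first invoke duality: the dual of $L^p_xL^q_y$ is $L^{p'}_xL^{q'}_y$, the adjoint of $m(L)$ is $\bar m(L)$ with the same Sobolev norm, and the regime $1<p\le q\le 2$ is carried to $2\le q'\le p'<\infty$. Thus it is enough to treat the single regime $1<p\le q\le 2$. There the constraint $p\le q$ is exactly what lets Minkowski's integral inequality interchange the two integrations, while both exponents being at most $2$ places one in the range where favorable $L^2$-based kernel and Hausdorff--Young--type estimates for the twisted convolution are available. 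Interpolating the resulting endpoint bounds against the trivial bound at $p=q=2$, where $m\in L^\infty$ already gives boundedness on $L^2(\R^{2d})$, should close the range.

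I expect the two genuinely delicate points to be, first, extracting sharp enough estimates on $k_m$ from the scale-invariant norm $\|m\|_{L^2_{\beta,sloc}}$ (this is where the threshold $\beta>(2d+1)/2$ must be used, presumably via a dyadic decomposition of the spectrum of $L$ and Mehler/Laguerre asymptotics), and second, verifying that the duality-and-interpolation scheme reproduces exactly the two stated regions $1<p\le q\le2$ and $2\le q\le p<\infty$ and does not spill outside them. The conceptual content is entirely in the intertwining reduction of the first paragraph; the technical weight sits in the mixed-norm twisted-convolution estimate.
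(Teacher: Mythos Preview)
Your intertwining reduction---$W(m(H)f,\Phi_0)=m(L)\,W(f,\Phi_0)$, hence $\|m(H)f\|_{M^{q,p}}=\|m(L)[W(f,\Phi_0)]\|_{L^{p,q}}$---is exactly the paper's starting point (see \eqref{hl}), so the first paragraph is correct and on target.

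The gap is in the second half: you never actually prove the mixed-norm bound for $m(L)$. You invoke duality, Minkowski, ``Hausdorff--Young--type estimates'', and interpolation against the $L^2$ endpoint, but you name no concrete endpoint bounds and no interpolation diagram that would fill the triangle $1<p\le q\le 2$. More seriously, twisted convolution on $\R^{2d}$ is not translation-invariant in either the $x$- or the $y$-variable separately, so there is no evident mechanism for decoupling the two integrations, and the kernel-plus-interpolation route you sketch is not known to work in this generality. The paper bypasses all of this by a further lift: instead of staying with $L$ on $\R^{2d}$ it passes to the sublaplacian $\tilde{\mathcal L}$ on the polarised Heisenberg group $\Hb^d_{pol}$ (where twisted convolution becomes genuine group convolution), quotes the M\"uller--Stein/Hebisch theorem (Theorem~\ref{MSH}) to get $m(\tilde{\mathcal L})$ bounded on $L^p(\Hb^d_{pol})$ under $\beta>(2d+1)/2$, transfers to $L^p$ of the quotient $G^d=\Hb^d_{pol}/\Gamma_0$ (Theorem~\ref{NTP}), and then applies the Herz--Rivi\`ere theorem (Theorem~\ref{hr}): because $G^d=\R^d\ltimes(\R^d\times\T)$ is a semidirect product with amenable factor, any left-invariant operator bounded on $L^p(G^d)$ is automatically bounded on $L^p(\R^d,L^q(\R^d\times\T))$ precisely for $1<p\le q\le 2$ or $2\le q\le p<\infty$. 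Herz--Rivi\`ere is the ingredient you are missing; the semidirect-product structure, visible only after adding back the central variable, is what makes it applicable and is what produces exactly the two stated $(p,q)$-regions. Your remark that $2d+1$ is the Heisenberg dimension is the right intuition, but the paper exploits it by actually working on that group and quoting the existing $L^p$ multiplier theorem there, rather than re-deriving kernel estimates. (A small correction: the $L^p(\R^d)$ Hermite multiplier theorem in \cite{ST1,ST} requires only $\beta>d/2$, not $(2d+1)/2$; the larger threshold here is an artifact of the Heisenberg-group route, as the paper itself remarks after the statement of Theorem~\ref{FT}.)
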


We remark that this theorem is not sharp. For $ m $ to be an $ L^p $ multiplier it is sufficient to assume the condition on $ m $ with $ \beta > d/2.$ We believe the same is true in the case of multipliers on modulation spaces though our method of proof requires a stronger assumption on $ m.$  However,  it is worth noting that $M^{p,p}(\mathbb R^d)$ for $p>2$ is a much wider class than $L^{p}(\mathbb R^d)$ (See Lemma \ref{pl}\eqref{inclu} below). Thus,  Theorem \ref{FT}  shows that the  H\"ormander-Mikhlin multiplier  type theorem is true for a much wider class than $L^p(\mathbb  R^d).$

We deduce Theorem \ref{FT} from a corresponding result on the polarised Heisenberg  group $ \Hb^d_{pol} = \mathbb R^d \times \mathbb R^d \times \mathbb R $  which is of Euclidean dimension $ (2d+1).$ Let $ \mathcal { \tilde{L}} $ stand for the sublaplacian on $  \Hb^d_{pol} $ and define $ m(\mathcal {\tilde{L}}) $ using spectral theorem (See Subsection \ref{HP} below for precise definitions).  We then have the following transference result.

\begin{theorem}\label{MT}
 Let $1< p\leq q  \leq 2$ or   $2\leq q \leq p < \infty.$ Then $ m(H) $ is bounded on $ M^{q,p}(\mathbb R^d) $ whenever $ m(\mathcal { \tilde{ L}}) $ is bounded on $ L^p(\Hb^d_{pol}).$
\end{theorem}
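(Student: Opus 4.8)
The plan is to realize $m(H)$ on $M^{q,p}(\R^d)$ as the $\lambda=1$ fibre of the group convolution operator $m(\tilde{\mathcal L})$ and then transfer the $L^p$ bound from $\Hb^d_{pol}$ fibrewise. First I would record the representation–theoretic link: the Schr\"odinger representations $\pi_\lambda$ ($\lambda\neq0$) of $\Hb^d_{pol}$ act on $L^2(\R^d)$ and carry the sublaplacian to the scaled Hermite operator, $\pi_\lambda(\tilde{\mathcal L})=H(\lambda)$ with $H(\lambda)=-\Delta+\lambda^2|x|^2$, so that $\pi_1(\tilde{\mathcal L})=H$. By the spectral theorem $m(\tilde{\mathcal L})$ is a group convolution, $m(\tilde{\mathcal L})F=F\ast\varphi$, whose kernel satisfies $\pi_\lambda(\varphi)=m(H(\lambda))$, and the hypothesis is exactly that $F\mapsto F\ast\varphi$ is bounded on $L^p(\Hb^d_{pol})$. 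On the other side, the Fourier--Wigner transform $W\colon f\mapsto\langle\pi(\cdot)f,\Phi_0\rangle$ identifies $M^{q,p}(\R^d)$ tautologically with the mixed-norm space $L^{p,q}(\R^{2d})$, and the special Hermite calculus shows that $W$ intertwines $m(H)$ with a $\lambda=1$ twisted convolution, $W(m(H)f)=k_1\times_{1}Wf$, where $k_1=\sum_k m(2k+d)\varphi_k$ is assembled from the Laguerre projection kernels. Thus it suffices to prove that $u\mapsto k_1\times_{1}u$ is bounded on $L^{p,q}(\R^{2d})$.

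Next I would set up the transference. Taking the Fourier transform in the central variable $t$ (dual to $\lambda$) turns the group convolution $F\ast\varphi$ into the family of $\lambda$-twisted convolutions $u\mapsto k_\lambda\times_\lambda u$ on $\R^{2d}$, the fibre at $\lambda=1$ being precisely the operator $k_1\times_1(\cdot)$ that represents $m(H)$. Moreover the dilation $\delta_{\sqrt\lambda}$ conjugates $\times_\lambda$ into $\times_1$, so all fibres are copies of the $\lambda=1$ fibre up to an explicit scaling of the $z$-variable. The strategy is then Coifman--Weiss in spirit: feed the group operator test functions whose central Fourier transform $g(\lambda)$ concentrates near $\lambda=1$, so that $F\ast\varphi$ reproduces, in the limit, the single fibre $k_1\times_1(\cdot)$ acting on $Wf$, while the $L^p(\Hb^d_{pol})$ bound provided by the hypothesis passes to the fibre with constants uniform as $g$ concentrates. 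Combined with the identification $M^{q,p}\cong L^{p,q}(\R^{2d})$ and the intertwining relation, this gives $m(H)=W^{-1}\circ(k_1\times_1(\cdot))\circ W$ bounded on $M^{q,p}(\R^d)$.

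The hard part---and the entire source of the index $q$ and of the two admissible ranges---is matching the single-exponent group norm $L^p(\Hb^d_{pol})=L^p(\R^{2d}\times\R)$ with the mixed norm $L^{p,q}(\R^{2d})$. The outer $L^p$ in the translation variable matches the corresponding group variable directly, but the inner $L^q$ of the Fourier--Wigner transform must be recovered from the single exponent $p$ together with the extra central variable. I expect this to be achieved by a Hausdorff--Young/Plancherel argument in the central direction: the $t$-Fourier transform, coupled to the $\sqrt\lambda$-scaling that relates the fibres $\times_\lambda$ to $\times_1$, converts an $L^p$ integral over $(z,t)$ into a mixed $L^{p,q}$ integral over $z$, the exponent $q$ being produced by the Hausdorff--Young exchange in the pair $t\leftrightarrow\lambda$. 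It is precisely the direction in which Hausdorff--Young must point that forces $1<p\le q\le2$ in the principal case, with $2\le q\le p<\infty$ obtained by the dual argument; outside these ranges the required inequality reverses. Once this two-sided comparison is established with uniform constants, the transference is complete and Theorem~\ref{MT} follows.
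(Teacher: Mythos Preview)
Your intertwining relation between $m(H)$ and the sublaplacian via the Fourier--Wigner/Schr\"odinger representation is correct and is exactly the starting point of the paper's argument. The divergence, and the gap, is in the last paragraph: the passage from the single-exponent bound $L^p(\Hb^d_{pol})$ to the mixed norm $L^{p,q}(\R^{2d})$.

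You propose to recover the inner exponent $q$ from a Hausdorff--Young argument in the central variable $t\leftrightarrow\lambda$, together with the dilation $\delta_{\sqrt\lambda}$. But Hausdorff--Young in the central direction produces the dual exponent $p'$, not an arbitrary $q$ in the range $[p,2]$; and the $\sqrt\lambda$-scaling contributes Jacobian powers $|\lambda|^{d(\cdot)}$, not a change of Lebesgue exponent in the $z$-variable. There is no mechanism here that turns an $L^p_x L^p_y L^p_t$ bound into an $L^p_x L^q_y$ bound on the fibre at $\lambda=1$, and the sentence ``I expect this to be achieved by\ldots'' is where the argument actually stops. The two admissible ranges $1<p\le q\le2$ and $2\le q\le p<\infty$ are not Hausdorff--Young ranges; they are the ranges in the Herz--Rivi\`ere theorem for translation-invariant operators on semidirect products with an amenable factor.

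The paper avoids this difficulty entirely. It first transfers the $L^p(\Hb^d_{pol})$ bound for $m(\tilde{\mathcal L})$ to an $L^p$ bound for $m(\tilde{\mathcal L}_0)$ on the \emph{reduced} group $G^d=\Hb^d_{pol}/\Gamma_0\cong\R^d\times\R^d\times\T$ (a de~Leeuw--type step in the central variable). On $G^d$ the function $(x,y,e^{it})\mapsto\langle\rho(x,y,e^{it})f,\Phi_0\rangle$ is genuinely in $L^p$ and its $L^p(\R^d_x,L^q(\R^d_y\times\T))$ norm equals $\|f\|_{M^{q,p}}$. Then $G^d$ is written as a semidirect product $\R^d\ltimes(\R^d\times\T)$ with abelian (hence amenable) factor, and the Herz--Rivi\`ere theorem upgrades the $L^p(G^d)$ bound for the left-invariant operator $m(\tilde{\mathcal L}_0)$ to the required mixed-norm bound, precisely under the stated restrictions on $(p,q)$. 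This is what replaces your speculative Hausdorff--Young step.
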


The above theorem allows us to deduce some interesting corollaries for Hermite multipliers. For example,   let $ R_j = (-\frac{\partial}{\partial \xi_j}+\xi_j) H^{-1/2} , j = 1,2, ...,d $ be the Riesz transforms associated to $ H.$  It is well known that these Riesz transforms are bounded on $ L^p(\mathbb R^d), 1 < p < \infty $ but their behavior on modulation spaces have not been studied. By considering Riesz transforms on $ G^d $ we obtain

\begin{corollary}
 Let $1< p\leq q  \leq 2$ or   $2\leq q \leq p < \infty.$ Then the Riesz transforms $ R_j $ are bounded on $ M^{q,p}(\mathbb R^d).$
\end{corollary}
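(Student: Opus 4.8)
The plan is to recognise each Hermite Riesz transform $R_j = (-\partial_{\xi_j}+\xi_j)H^{-1/2}$ as the Schr\"odinger-representation image of a horizontal Riesz transform on $\Hb^d_{pol}$, and then to transfer its boundedness from $L^p(\Hb^d_{pol})$ to $M^{q,p}(\R^d)$ by exactly the mechanism of Theorem \ref{MT}. The key observation is that the identification $\pi(\tilde{\mathcal L}) = H$ underlying Theorem \ref{MT} is the restriction to the central character $\lambda = 1$ of the group Fourier transform, and that under the same representation the horizontal left-invariant vector fields of $\Hb^d_{pol}$ are carried to the operators $\partial_{\xi_j}$ and multiplication by $\xi_j$. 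A suitable complex combination $\tilde Z_j$ of these vector fields is therefore mapped precisely onto the creation operator $A_j = -\partial_{\xi_j}+\xi_j$, so that $R_j = A_j H^{-1/2}$ is nothing but the image of the group Riesz transform $\tilde R_j = \tilde Z_j \tilde{\mathcal L}^{-1/2}$.

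First I would record the two ingredients this requires. On the one hand, $\Hb^d_{pol}$ is a stratified Lie group, so its Riesz transforms $\tilde Z_j \tilde{\mathcal L}^{-1/2}$ are Calder\'on--Zygmund operators and hence bounded on $L^p(\Hb^d_{pol})$ for \emph{every} $1 < p < \infty$; this is classical and needs no restriction on $p$. On the other hand, the representation-theoretic computation above shows that the action of $\tilde R_j$ on the single representation $\lambda = 1$ is exactly $R_j$. Thus everything reduces to transferring the group-level bound to the modulation space.

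The main obstacle is that Theorem \ref{MT} is phrased only for spectral multipliers, $m(H) \leftrightarrow m(\tilde{\mathcal L})$, whereas $R_j$ is not a function of $H$ alone. The genuine work is therefore to verify that the transference argument proving Theorem \ref{MT} never uses that the operator is a function of the sublaplacian: it uses only that the operator on $\R^d$ arises as the Schr\"odinger image of a left-invariant, hence twisted-convolution, operator on $\Hb^d_{pol}$, and that the $M^{q,p}$-norm is comparable to the $L^p(\Hb^d_{pol})$-norm of the associated matrix coefficient. Since $\tilde R_j$ is left-invariant with image $R_j$, re-running that argument should transfer the $L^p(\Hb^d_{pol})$-boundedness of $\tilde R_j$ to $M^{q,p}(\R^d)$-boundedness of $R_j$, yielding the corollary in the stated range $1 < p \le q \le 2$ or $2 \le q \le p < \infty$, the range being inherited entirely from Theorem \ref{MT} rather than from the group estimate.
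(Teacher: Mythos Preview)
Your proposal is correct and matches the paper's intended argument, which the paper compresses into the single phrase ``by considering Riesz transforms on $G^d$'' without further elaboration. You have correctly identified the essential point: the transference machinery behind Theorem \ref{MT} (the identity $X\langle \rho(\cdot)f,\Phi_0\rangle = \langle \rho(\cdot)\rho^*(X)f,\Phi_0\rangle$ for left-invariant $X$, together with Theorem \ref{NTP} and the Herz--Rivi\`ere theorem) uses only left-invariance, not the spectral-multiplier form $m(\tilde{\mathcal L})$, so it applies verbatim to the group Riesz transforms $\tilde Z_j\tilde{\mathcal L}^{-1/2}$, whose image under $\rho^*$ is exactly $A_jH^{-1/2}=R_j$.

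One cosmetic difference: the paper speaks of Riesz transforms on $G^d=\Hb^d_{pol}/\Gamma_0$ directly, whereas you start on $\Hb^d_{pol}$ and descend. Either route works; yours has the advantage that $L^p$-boundedness of Riesz transforms on the simply connected stratified group $\Hb^d_{pol}$ is entirely classical, and the passage to $G^d$ is then the same de Leeuw-type step already used in Theorem \ref{NTP} (which again needs only left-invariance, not the form $m(\tilde{\mathcal L})$). The paper's phrasing presupposes that Riesz transforms on $G^d$ are themselves known to be $L^p$-bounded, which is true but perhaps less immediately citable.
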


Another interesting corollary is the following result about solutions of the wave equation associated to $ H.$  Consider the following Cauchy problem:

$$\partial_{t}^{2}u(x,t)= -Hu(x,t),~~~ u(x,0)=0,~~~ \partial_{t}u(x,0)=f(x)$$
whose solution is given by $ u(x,t) = H^{-1/2} \sin(tH^{1/2})f(x).$

\begin{corollary}\label{AW} Let $ u $ be the solution of the above Cauchy problem for $ H.$ Then for $1< p\leq q  \leq 2$ or   $2\leq q \leq p < \infty $ we have the estimate $ \|u(\cdot,t)\|_{M^{q,p}} \leq C_t \|f\|_{M^{q,p}}$ provided $ |\frac{1}{p}-\frac{1}{2}| < \frac{1}{2d}.$
\end{corollary}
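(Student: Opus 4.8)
The plan is to bypass Theorem \ref{FT} entirely and to argue instead through the transference principle of Theorem \ref{MT}. The solution operator is the spectral multiplier $m_t(H)$ with $m_t(\lambda)=\lambda^{-1/2}\sin(t\sqrt\lambda)$. The first point I would make explicit is that $m_t$ does \emph{not} satisfy the hypothesis of Theorem \ref{FT}: on the support of $\psi(s\,\cdot)$ the rescaled function is $m_t(s\lambda)=(s\lambda)^{-1/2}\sin\!\big(ts^{1/2}\lambda^{1/2}\big)$, and differentiating the phase $ts^{1/2}\lambda^{1/2}$ produces factors of $ts^{1/2}$, so that $\|\psi m_t(s\cdot)\|_{L^2_\beta}\sim t^\beta s^{(\beta-1)/2}\to\infty$ as $s\to\infty$ whenever $\beta>1$. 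Thus $\|m_t\|_{L^2_{\beta,sloc}}=\infty$, the wave propagator is not an ordinary H\"ormander--Mikhlin multiplier, and a genuinely oscillatory argument — which is precisely what forces the extra restriction on $p$ — is unavoidable.

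By Theorem \ref{MT} it then suffices to show that $m_t(\tilde{\mathcal{L}})$ is bounded on $L^p(\Hb^d_{pol})$, where $\tilde{\mathcal{L}}$ is the sublaplacian. I would split $m_t=m_t\chi+m_t(1-\chi)$ with $\chi\in C_0^\infty$ equal to $1$ near the origin. Since $\sin(t\sqrt\lambda)/\sqrt\lambda$ is smooth and bounded near $\lambda=0$ (indeed $m_t(\lambda)\to t$ as $\lambda\to0^+$) and of Mikhlin type there, the low-frequency piece $m_t(\tilde{\mathcal{L}})\chi(\tilde{\mathcal{L}})$ is bounded on $L^p(\Hb^d_{pol})$ for every $1<p<\infty$ by the H\"ormander--Mikhlin multiplier theorem for the Heisenberg sublaplacian. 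Note this low-frequency analysis is needed only on the group side, since on $\mathbb R^d$ the spectrum of $H$ is bounded below by $d$.

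The substance of the argument is the high-frequency piece, where $m_t(1-\chi)(\lambda)$ is, up to smooth symbols of order $0$, of the form $\lambda^{-1/2}e^{\pm it\sqrt\lambda}$ — that is, a smoothing of order one applied to the half-wave propagator $e^{\pm it\tilde{\mathcal{L}}^{1/2}}$. Here I would invoke the sharp fixed-time $L^p$ estimates of M\"uller and Stein for the wave equation on the Heisenberg group, the analogue of the Miyachi--Peral theorem in which the \emph{Euclidean} (topological) dimension $2d+1$ of $\Hb^d_{pol}$ — rather than the larger homogeneous dimension $2d+2$ — plays the role of the ambient dimension. A smoothing of order $1$ then suffices exactly when $1>(2d+1-1)\,|\tfrac1p-\tfrac12|=2d\,|\tfrac1p-\tfrac12|$, i.e.\ when $|\tfrac1p-\tfrac12|<\tfrac1{2d}$. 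Combining the two pieces gives $L^p(\Hb^d_{pol})$-boundedness of $m_t(\tilde{\mathcal{L}})$, and transferring back via Theorem \ref{MT} yields $\|u(\cdot,t)\|_{M^{q,p}}\le C_t\|f\|_{M^{q,p}}$ on the stated range of $(p,q)$.

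The main obstacle is this high-frequency input: one must know that on the Heisenberg group the wave operator obeys Euclidean-dimension Sobolev bounds, which is a nontrivial theorem (M\"uller--Stein) and is the sole source of the constraint $|\tfrac1p-\tfrac12|<\tfrac1{2d}$. The $t$-dependence of $C_t$ likewise comes from tracking the time parameter in that estimate — it is uniform on compact time intervals and grows at worst polynomially in $t$ — and one should keep careful account of how the oscillation frequency $ts^{1/2}$ enters when passing to the group multiplier.
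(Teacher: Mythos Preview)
Your approach is correct and is precisely the one the paper intends: Corollary \ref{AW} is stated without proof as a direct consequence of Theorem \ref{MT}, and the only missing ingredient is the $L^p(\Hb^d_{pol})$-boundedness of the wave multiplier $m_t(\tilde{\mathcal L})$, which you correctly identify with the M\"uller--Stein $L^p$ estimates for the wave equation on the Heisenberg group (with the Euclidean dimension $2d+1$ in place of the homogeneous one, yielding the threshold $|\tfrac1p-\tfrac12|<\tfrac1{2d}$). Your low/high frequency splitting and the observation that $m_t$ fails the scale-invariant Mikhlin condition are accurate elaborations of what the paper leaves implicit.
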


When $ p = q $ Theorem \ref{MT} as well as Corollary \ref{AW} can be improved. This will be achieved using transference as before, but now the transference is from multiplier theorems for multiple Fourier series. Given a function $ m $ on $ \R $ we define a Fourier multiplier $ T_m $ on $ L^p(\T^d)$ where $ \T^d $ is the $d-$dimensional torus by
$$ T_mf(x) =  \sum_{\mu \in \Z^d} m(|\mu|) \hat{f}(\mu) e^{i\mu \cdot x}.$$
Here $ \hat{f}(\mu) $ are the Fourier coefficients of $ f $ and $|\mu| = \sum_{j=1}^d |\mu_j|.$ Using the connection (See Proposition \ref{tpt} below) between Fourier multipliers $ T_m $ on $ L^p(\T^d) $ an¡d $ m(H) $ on $ M^{p,p} $ we prove
\begin{theorem}\label{cstn} 
 Let $1 <  p  < \infty.$
  Suppose that $\|m\|_{L^{2}_{\beta, sloc }} < \infty$ for some $\beta > d/2.$  Then the operator $m(H)$ is bounded on $M^{p,p}(\mathbb R^d)$.
\end{theorem}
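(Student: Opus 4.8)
The plan is to deduce the theorem from the corresponding multiplier theorem for multiple Fourier series, as suggested by the connection recorded in Proposition \ref{tpt}. First I would apply Proposition \ref{tpt} to reduce the boundedness of $m(H)$ on $M^{p,p}(\R^d)$ to the boundedness of the periodic Fourier multiplier $T_m$ on $L^p(\T^d)$, where $T_mf(x)=\sum_{\mu\in\Z^d}m(|\mu|)\hat f(\mu)e^{i\mu\cdot x}$ and $|\mu|=\sum_{j=1}^d|\mu_j|$. Everything is then reduced to proving a H\"ormander--Mikhlin type theorem on the torus for the symbol $\mu\mapsto m(|\mu|)$ under the hypothesis $\|m\|_{L^2_{\beta,sloc}}<\infty$ with $\beta>d/2$.

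To establish the torus bound I would transfer to $\R^d$ by a de Leeuw type restriction theorem. Set $M(\xi)=m(|\xi|_1)$ on $\R^d$, with $|\xi|_1=\sum_{j=1}^d|\xi_j|$; this continuous function restricts on the lattice $\Z^d$ to the sequence $m(|\mu|)$. Since $\beta>d/2\ge 1/2$, Sobolev embedding renders $m$, and hence $M$, bounded and continuous, so de Leeuw's theorem bounds the multiplier norm of $T_m$ on $L^p(\T^d)$ by the Fourier multiplier norm of $M$ on $L^p(\R^d)$ for every $1<p<\infty$. It thus suffices to show that $M$ is an $L^p(\R^d)$ multiplier, and for this I would verify the classical $d$-dimensional H\"ormander--Mikhlin condition $\sup_{R>0}\|\phi\,M(R\cdot)\|_{L^2_\beta(\R^d)}<\infty$ for a fixed annular cut-off $\phi$; the exponent $\beta>d/2$ is exactly the H\"ormander threshold in dimension $d$.

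The heart of the matter is the transfer of regularity from the single variable carrying $m$ to the $d$ variables of $M$. Writing $M(R\xi)=m(R|\xi|_1)$ and differentiating away from the coordinate hyperplanes, every derivative in $\xi$ lands on the profile $m$ and emits a factor $R$, so $\partial^\alpha_\xi[M(R\xi)]$ is a finite linear combination of terms $R^{|\alpha|}m^{(|\alpha|)}(R|\xi|_1)$ with bounded coefficients. Integrating $|\partial^\alpha_\xi M(R\cdot)|^2$ against $|\phi|^2$ and slicing by the level sets $\{|\xi|_1=s\}$, whose surface measure is comparable for $s\sim 1$, collapses the $d$-dimensional integral to a one-dimensional integral of $R^{2|\alpha|}|m^{(|\alpha|)}(Rs)|^2$ over $s\sim 1$. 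Summing over $|\alpha|\le\beta$ and taking the supremum in $R$ bounds the scale-invariant Sobolev norm of $M$ by $\|m\|_{L^2_{\beta,sloc}}$ up to a constant depending only on $d$, with the fractional range of $\beta$ handled by interpolation.

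The main obstacle is precisely the failure of smoothness of $M(\xi)=m(|\xi|_1)$ across the hyperplanes $\{\xi_j=0\}$, where the $\ell^1$-norm has a crease: when $m'$ does not vanish the gradient of $M$ jumps there, and for $\beta>3/2$ this keeps $M$ out of $H^\beta_{loc}$, so the computation above is only valid in the interior of each sign-orthant. I would remove this difficulty by splitting $L^p(\T^d)$ into the $2^d$ sign-sectors of the frequency lattice by means of the iterated Riesz (conjugate-function) projections, which are bounded on $L^p(\T^d)$ for $1<p<\infty$. On the sector fixed by a sign vector $\epsilon\in\{\pm1\}^d$ one has $|\mu|_1=\sum_j\epsilon_j\mu_j$, so the symbol becomes the crease-free ridge $m\bigl(\sum_j\epsilon_j\xi_j\bigr)$, to which the regularity transfer and the $d$-dimensional H\"ormander--Mikhlin theorem apply without obstruction; the lower-dimensional overlaps on $\{\mu_j=0\}$ reduce to the same problem in dimension $d-1$ and are absorbed by an induction on $d$. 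Reassembling the finitely many pieces gives the boundedness of $T_m$ on $L^p(\T^d)$, and therefore of $m(H)$ on $M^{p,p}(\R^d)$.
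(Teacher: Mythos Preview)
Your overall route---Proposition \ref{tpt}, then de Leeuw, then a H\"ormander--Mikhlin argument on $\R^d$---is exactly the strategy the paper has in mind; the paper in fact does not spell out this last step at all, so you are supplying more detail than it does. You also correctly isolate the crease of $|\xi|_1$ as the obstacle to verifying the $d$-dimensional condition for $M(\xi)=m(|\xi|_1)$ directly once $\beta>3/2$.

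The gap is in your repair. After projecting onto the $\epsilon$-orthant you pass to the ridge $M_\epsilon(\xi)=m(\epsilon\cdot\xi)$ and claim the $d$-dimensional H\"ormander condition now holds ``without obstruction.'' It does not: on a Euclidean annulus $\{|\xi|\sim 1\}$ the linear form $\epsilon\cdot\xi$ sweeps through all of $[-\sqrt d,\sqrt d]$, so your own slicing argument now produces $\int_{-c}^{c} R^{2|\alpha|}\,|m^{(|\alpha|)}(Rs)|^{2}\,ds$ with $s$ passing through $0$. The scale-invariant hypothesis $\sup_{t>0}\|\psi\,m(t\cdot)\|_{L^2_\beta}$ only controls $m^{(k)}$ on dyadic shells $[t/2,t]$ bounded away from the origin and is perfectly compatible with $|m^{(k)}(s)|\sim|s|^{-k}$; for $|\alpha|\ge 1$ the integral near $s=0$ can therefore diverge, and $\sup_R\|\phi\,M_\epsilon(R\cdot)\|_{L^2_\beta(\R^d)}$ is in general infinite. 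In short, removing the crease has traded one obstruction for another: the ridge forces you through $m$ near $0$, precisely where the hypothesis is silent. The fix is to abandon the $d$-dimensional symbol check for the ridge altogether. After a rotation sending $\epsilon/\sqrt d$ to $e_1$, the operator with symbol $m(\epsilon\cdot\xi)$ acts in a single variable, so by Fubini it is bounded on $L^p(\R^d)$ as soon as $s\mapsto m(|s|)$ is bounded on $L^p(\R)$; the one-dimensional H\"ormander theorem gives this from $\beta>1/2$. Then de Leeuw restricts to $\Z^d$, composition with the bounded Riesz orthant projections on $L^p(\T^d)$ is harmless, and summing over the $2^d$ sign vectors finishes the torus estimate---no induction on $d$ is needed if the $P_\epsilon$ are chosen to partition $\Z^d$.
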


We also have the following improvement of Corollary \ref{AW}. More generally, we consider multipliers of the form

\begin{eqnarray}\label{st}
m(2k+d)=  \frac{e^{i(2k+d)^{\gamma}}}{(2k+d)^{\beta}}, \   \  (\beta>0, \gamma>0).
\end{eqnarray}
\begin{theorem}\label{HMT}
Let $1\leq p < \infty,$ and  $|\frac{1}{p}-\frac{1}{2}|<\frac{\beta}{d \gamma}$, and let $m$ be given by \eqref{st}. Then  $m(H) $ is bounded on  $ M^{p,p}(\mathbb R^d).$ In particular,   Corollary \ref{AW} is valid on the bigger range $ |\frac{1}{p}-\frac{1}{2}| < \frac{1}{d} $ when $ p =q.$
\end{theorem}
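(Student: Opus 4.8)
The plan is to deduce Theorem \ref{HMT} by transference to multiple Fourier series, followed by an oscillating multiplier theorem of Miyachi--Peral type. By Proposition \ref{tpt} it suffices to prove that the periodic Fourier multiplier $T_m$, given by $T_m f(x) = \sum_{\mu \in \Z^d} m(|\mu|)\hat f(\mu)\,e^{i\mu\cdot x}$ with $|\mu| = \sum_{j=1}^d|\mu_j|$, is bounded on $L^p(\T^d)$ in the stated range, where $m(n)=e^{in^\gamma}/n^\beta$. The frequency $\mu=0$ contributes only a rank-one operator, so I am free to modify $m$ near the origin.

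First I would pass from $\T^d$ to $\R^d$ by de Leeuw's transference principle: if $M\in C(\R^d)$ restricts to $m(|\mu|)$ on $\Z^d$ and is an $L^p(\R^d)$ Fourier multiplier, then $\|T_m\|_{L^p(\T^d)\to L^p(\T^d)}\le \|T_M\|_{L^p(\R^d)\to L^p(\R^d)}$. The requirement $M(\mu)=m(|\mu|)$ forces the choice
$$ M(\xi)=\chi(\xi)\,\frac{e^{i|\xi|^\gamma}}{|\xi|^\beta},\qquad |\xi|=\sum_{j=1}^d|\xi_j|, $$
with $\chi$ smooth, vanishing near the origin and equal to $1$ for $|\xi|\ge 1$; the low-frequency part cut out by $\chi$ defines an operator with smooth, rapidly decaying kernel and is bounded on every $L^p$. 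Thus Theorem \ref{HMT} reduces to the $L^p(\R^d)$ boundedness of the oscillating multiplier $M$, and the target range $|\tfrac1p-\tfrac12|<\beta/(d\gamma)$ is precisely the Miyachi--Peral range for a phase of homogeneity $\gamma$ and a symbol of decay $\beta$ in dimension $d$.

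The heart of the argument is this oscillating multiplier estimate, which I would carry out along classical lines. Decompose $M=\sum_{k\ge 0}M_k$ by a Littlewood--Paley partition in $|\xi|$, with $M_k$ supported where $|\xi|\sim 2^k$. Rescaling the $k$-th shell to unit size exhibits $M_k$ as $2^{-k\beta}$ times a multiplier carrying the large oscillation $e^{i2^{k\gamma}|\eta|^\gamma}$, so everything comes down to controlling, as the oscillation parameter $\lambda=2^{k\gamma}\to\infty$, the $L^1$ norms of the kernels of these rescaled pieces (or the corresponding $H^1\to L^1$ and $L^\infty\to BMO$ endpoint bounds). Summing the resulting geometric series and interpolating against the trivial bound $\|M\|_{L^\infty}\lesssim 1$ on $L^2$ yields boundedness exactly in the open range $|\tfrac1p-\tfrac12|<\beta/(d\gamma)$. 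The main obstacle is the kernel analysis itself: one must use stationary phase to measure how these oscillatory integrals decay in $\lambda$, and it is the power $\gamma$ in the phase (with $\gamma\neq 1$, so that $e^{i|\xi|^\gamma}$ is genuinely oscillatory rather than a transport) that produces the gain recorded by $\beta/(d\gamma)$. A further technical wrinkle is that $|\xi|$ is the $\ell^1$ norm, so the frequency level sets are polytopes and the phase is only piecewise smooth; the stationary phase analysis must accommodate the flat faces, edges and corners of these polytopes, and this is the delicate point of the whole proof.

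Finally, the asserted improvement of Corollary \ref{AW} follows by specialization. The wave propagator $H^{-1/2}\sin(tH^{1/2})$ has spectral multiplier $\sin\!\big(t(2k+d)^{1/2}\big)/(2k+d)^{1/2}$, which is a linear combination of the two multipliers $e^{\pm it(2k+d)^{1/2}}(2k+d)^{-1/2}$; these are of the form \eqref{st} with $\gamma=\beta=\tfrac12$, the parameter $t$ entering only through the constant $C_t$ (the oscillating multiplier norm grows with $t$ but the range does not depend on it). For these values Theorem \ref{HMT} gives boundedness on $M^{p,p}(\R^d)$ whenever $|\tfrac1p-\tfrac12|<\dfrac{1/2}{d\cdot(1/2)}=\dfrac1d$, which is the claimed extension of the range $|\tfrac1p-\tfrac12|<\tfrac{1}{2d}$ in Corollary \ref{AW}.
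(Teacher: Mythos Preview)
Your overall architecture is exactly the paper's: invoke Proposition~\ref{tpt} to reduce to an $L^p(\T^d)$ multiplier, then de Leeuw (Theorem~\ref{dl}) to reduce to an $L^p(\R^d)$ multiplier, and finally prove the oscillating multiplier bound on $\R^d$. One small slip in the setup: in the notation of Definition~\ref{HM} the Hermite multiplier acts as $\alpha\mapsto m(2|\alpha|+d)$, so after transference the continuous symbol must be $M(\xi)=e^{i(2|\xi|_1+d)^\gamma}/(2|\xi|_1+d)^\beta$, not $e^{i|\xi|^\gamma}/|\xi|^\beta$. This is what the paper uses (Proposition~\ref{lp}), and it has the bonus that $2|\xi|_1+d\ge d>0$, so no cutoff $\chi$ or low-frequency correction is needed.

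Where you genuinely diverge is in the $L^p(\R^d)$ step. You propose Littlewood--Paley plus stationary phase in the Miyachi--Peral spirit, and you correctly flag that the $\ell^1$ phase has polytope level sets, so curvature-based stationary phase does not apply directly; indeed in each open orthant the Hessian of $(\sum\xi_j)^\gamma$ has rank one, and the pieces near coordinate hyperplanes are not smooth. The paper avoids this difficulty completely with a Schonbek-type subordination (Proposition~\ref{lp} and Lemma~\ref{tl}): write $(2|\xi|_1+d)^{-\beta}=\Gamma(\beta/\gamma)^{-1}\int_0^\infty \sigma^{\beta/\gamma-1}e^{-\sigma(2|\xi|_1+d)^\gamma}\,d\sigma$, so $T_Mf=\Gamma(\beta/\gamma)^{-1}\int_0^\infty \sigma^{\beta/\gamma-1}k_\sigma*f\,d\sigma$ with $\widehat{k_\sigma}(\xi)=e^{(i-\sigma)(2|\xi|_1+d)^\gamma}$. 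The Gaussian damping lets one estimate $\||x|^l k_\sigma\|_{L^2}$ by Plancherel and differentiation, yielding $\|k_\sigma\|_{L^1}\lesssim\sigma^{-d/2}$; interpolating with the trivial $L^2$ bound $\|k_\sigma*f\|_{L^2}\le\|f\|_{L^2}$ and integrating in $\sigma$ gives exactly $|\tfrac1p-\tfrac12|<\beta/(d\gamma)$. No dyadic decomposition, no stationary phase, and the $\ell^1$ geometry never enters. Your route may be salvageable, but the ``delicate point'' you isolate is precisely what the paper's argument is engineered to bypass. Your derivation of the wave-equation range $|\tfrac1p-\tfrac12|<1/d$ from $\gamma=\beta=\tfrac12$ is correct.
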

\noindent
We now turn our attention to a multiplier which occurs in the time evolution of the free Schr\"odinger equation associated to $H.$ Specifically, we consider the Cauchy problem for the Schr\"odinger equation associated to $H:$
$$i\partial_{t}u(x,t) -Hu(x,t)=0, \   u(x,0)= f(x)$$
whose solution is given by $ u(x,t) = e^{itH}f(x).$

\begin{theorem}\label{mso}
The Schr\"odinger  propagator  $m(H)=e^{itH}$ is bounded on $M^{p,p}(\mathbb R^d)$ for all $1\leq p < \infty.$
\end{theorem}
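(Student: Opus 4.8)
The plan is to exploit the special structure of the propagator rather than treat $e^{it(2k+d)}$ as a generic multiplier. With $\gamma=1$ and $\beta=0$ this symbol falls outside the scope of Theorem \ref{HMT} (which would only return $p=2$), so the uniform boundedness across the whole range $1\le p<\infty$ must come from the fact that $e^{itH}$ is, up to normalising constants, a metaplectic operator admitting an explicit oscillatory-integral representation. Concretely, I would decompose $e^{itH}$ into a short chain of elementary operators---chirp multiplications, a dilation, and the Fourier transform---each bounded on $M^{p,p}(\R^d)$, and conclude by composition.

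First I would reduce to a generic value of $t$. Since the eigenvalues $2k+d$ are equally spaced with gap $2$, the operator $e^{itH}$ is $\pi$-periodic in $t$ up to the unimodular scalar $e^{i\pi d}$, so it suffices to consider $t\in(0,\pi)$. The exceptional times $t\in\frac{\pi}{2}\Z$ are handled directly: there $\sin 2t=0$ and $e^{itH}$ acts on the Hermite basis $\{\Phi_\alpha\}$ by $(-1)^{|\alpha|}$ up to a unimodular constant, hence equals a scalar multiple either of the identity or of the parity operator $f(x)\mapsto f(-x)$, both bounded on $M^{p,p}(\R^d)$.

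For generic $t$ (that is $\sin 2t\neq0$) I would invoke Mehler's formula for the harmonic oscillator, which gives, up to constants,
\[
e^{itH}f(x)=c_t(\sin 2t)^{-d/2}\int_{\R^d} e^{\frac{i}{2}\cot(2t)(|x|^2+|y|^2)}\,e^{-\frac{i}{\sin 2t}\,x\cdot y}\,f(y)\,dy .
\]
Reading the kernel off as a product of a chirp in $x$, the exponential $e^{-ix\cdot y/\sin 2t}$, and a chirp in $y$, this exhibits the factorisation
\[
e^{itH}=c_t'\,M_a\,D_\lambda\,\mathcal F\,M_a,\qquad a=\tfrac12\cot 2t,\quad \lambda=\tfrac{1}{\sin 2t},
\]
where $M_a$ is multiplication by the chirp $e^{ia|x|^2}$, $D_\lambda$ is an $L^2$-normalised dilation, and $\mathcal F$ is the Fourier transform. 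It then remains to see that each factor is bounded on $M^{p,p}(\R^d)$: $\mathcal F$ by the Fourier invariance of $M^{p,p}$ recorded in the introduction, $D_\lambda$ by the standard boundedness of dilation operators on modulation spaces, and $M_a$ by the argument below.

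The main obstacle, and the only place where the restriction $p=q$ is genuinely used, is the boundedness of the chirp multiplication $M_a$ on $M^{p,p}$. Here I would compute the short-time Fourier transform of $M_a f$ and use the standard identity $V_g(M_a f)(x,\xi)=(\text{unimodular})\,V_{g_a}f(x,\xi-cx)$ for a suitable constant $c$ proportional to $a$, where $g_a=M_a g$ is again an admissible Gaussian window; thus, after replacing $g$ by the equivalent window $g_a$, the transform of $M_a f$ is obtained from that of $f$ by the phase-space shear $(x,\xi)\mapsto(x,\xi-cx)$. Since this shear is a measure-preserving linear map of $\R^{2d}$ and the $M^{p,p}$ norm is exactly the $L^p(\R^{2d})$ norm of the short-time Fourier transform, it is invariant under the shear, whence $\|M_a f\|_{M^{p,p}}\lesssim\|f\|_{M^{p,p}}$ for every $1\le p<\infty$. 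I would stress that this argument uses no singular-integral bound, which is precisely why it survives at $p=1$; by contrast, the torus-transference route (Proposition \ref{tpt}) would force the symbol $e^{it|\mu|}=\prod_j e^{it|\mu_j|}$ through the periodic Hilbert transform arising from the $\operatorname{sgn}$-part of $e^{it|n|}$, which is unbounded on $L^1(\T)$. Combining the boundedness of the three factors with the composition of bounded operators completes the generic case, and together with the exceptional-time analysis yields boundedness on $M^{p,p}(\R^d)$ for all $1\le p<\infty$.
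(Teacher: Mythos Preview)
Your proof is correct but follows a genuinely different route from the paper's. The paper does not invoke Mehler's formula or the metaplectic factorisation at all. Instead, it computes the Fourier--Wigner transform $\langle\pi(z)e^{itH}f,\Phi_0\rangle$ directly using the explicit formula \eqref{EPSHF} for the special Hermite functions $\Phi_{\alpha,0}(z)=(2\pi)^{-d/2}(\alpha!)^{-1/2}(i/\sqrt2)^{|\alpha|}\bar z^{\alpha}e^{-|z|^2/4}$. Because the multiplier $e^{2it|\alpha|}$ is the same unimodular character in $\alpha$ as $\bar z^{\alpha}\mapsto \overline{(e^{2it}z)}^{\alpha}$, applying $e^{itH}$ amounts on the Bargmann side to the rotation $z_j\mapsto e^{2it}z_j$ in each complex coordinate; in polar coordinates this is the translation $\theta_j\mapsto\theta_j-2t$, which leaves the $L^p(\C^d)$ integral unchanged. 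This yields the exact equality $\|e^{itH}f\|_{M^{p,p}}=\|f\|_{M^{p,p}}$ for every $t$, i.e.\ an isometry with constant $1$ uniformly in $t$.

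Your metaplectic decomposition $e^{itH}=c_t'\,M_a D_\lambda\mathcal F M_a$ is a perfectly valid alternative: the Fourier invariance of $M^{p,p}$, the boundedness of dilations, and your shear argument for $M_a$ (which is exactly where $p=q$ enters) are all correct, and the window change $g\mapsto g_a$ is covered by Remark~\ref{equidm}. The trade-offs are that your route is more structural and immediately extends to any metaplectic operator, while the paper's argument is shorter, needs no kernel formula, and delivers a sharp $t$-independent constant; your bound, by contrast, picks up implicit constants through $a=\tfrac12\cot 2t$ and $\lambda=1/\sin 2t$ (harmless for the stated theorem, which is pointwise in $t$, but weaker than the isometry the paper obtains).
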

\noindent

%\noindent
Recently Cordero-Nicola \cite[Section 5.1]{en} have studied the operator $m(H)=e^{itH}$ on Wiener amalgam spaces (closely related to modulation spaces). Later Kato-Kobayashi-Ito \cite{kki} have given a refinement of Cordero-Nicola's results in the  context of Wiener amalgam spaces. We have studied (Theorem \ref{mso}) the boundeness of $m(H)=e^{itH}$ in the context of modulation spaces. Here we would like to point out that our method of proof is completely different from the method  used in the context of Wiener amalgam spaces. We also believe that our method of proof is much simpler than the proofs available  in the literature. Our  proof relies on properties of Hermite and special Hermite functions and illustrates  the importance of these functions in the study of such problems.

Finally, we note that  modulation spaces have been used as regularity classes for  initial data associated to Cauchy problems for nonlinear dispersive equations (eg., NLS, NLW, etc..) but so far mainly for the nonlinear dispersive equations  associated to the   Laplacian without potential  ($D=\Delta$) \cite{rsw, WZG, WB}. There is also an ongoing interest to use harmonic analysis tools (specifically, multiplier results) to solve modern nonlinear PDE problems (See e.g., \cite{cj}). Thus, we strongly believe that our results  will be useful in the future for studying   nonlinear dispersive  equations associated to  $H$  in the realm of modulation spaces.

The paper is organized as follows.  In Section \ref{p}, we introduce  notations and preliminaries which will be used  in the sequel. Specifically, in Subsection \ref{HP}, we  introduce the Heisenberg  and polarised  Heisenberg groups,  the sublaplacians corresponding to these groups, and spectral multipliers associated to these sublaplacians. In  Subsection \ref{tml}, we prove the transference result which connects spectral multiplier on polarised Heisenberg groups and reduced polarised  Heisenberg groups.  In Subsection \ref{MW}, we introduce modulation  spaces and  recall some of their basic properties. In Section \ref{hmvt}, we prove Theorems \ref{FT} and \ref{MT}. In Section \ref{hmft}, we prove Theorems  \ref{cstn} and  \ref{HMT}. In Section \ref{hmsos}, we prove Theorem \ref{mso}.

\section{Notation and Preliminaries}\label{p}
\noindent
\subsection{Notations} The notation $A \lesssim B $ means $A \leq cB$ for  some constant $c > 0 $. The symbol $A_{1}\hookrightarrow A_{2}$ denotes the continuous embedding  of the topological linear space $A_{1}$ into $A_{2}.$ 
If $\alpha= (\alpha_1,..., \alpha_d)\in \mathbb N^d$ is a multi-index, we set $|\alpha|= \sum_{j=1}^d \alpha_j, \alpha != \prod_{j=1}^d\alpha_j!.$ If $z=(z_1,...,z_d)\in \mathbb C^d,$ we put $z^{\alpha}=\prod_{j=1}^dz_{j}^{\alpha_j}$.
We denote the $d-$dimensional torus by $\mathbb T^d \equiv [0, 2\pi)^d,$ and the $L^{p}(\mathbb T^d)-$norm  by
$$\|f\|_{L^{p}(\mathbb T^d)}=\left( \int_{[0, 2\pi)^d} |f(t)|^p dt \right)^{1/p}.$$
The class of  trigonometric polynomials on  $\mathbb T^d$ is denoted by $\mathcal{P}(\mathbb T^d).$
 The mixed  $L^{p}(\mathbb R^{d}, L^{q}(\mathbb R^{d}))$ norm is denoted by 
 $$\|f\|_{L^{p,q}}=\left( \int_{\mathbb R^{d}} \left ( \int_{\mathbb R^{d}} |f(x,y)|^{q} dy \right)^{p/q} dx\right )^{1/p} \ \  (1\leq p, q < \infty),$$
the $L^{\infty}(\mathbb R^{d})$ norm  is $\|f\|_{L^{\infty}}= \text{ess.sup}_{ x\in \mathbb R^{d}}|f(x)| $.  The Schwartz class is denoted  by $\mathcal{S}(\mathbb R^{d})$ (with its usual topology), and the space of tempered distributions is denoted by  $\mathcal{S'}(\mathbb R^{d}).$  For $x=(x_1,\cdots, x_d), y=(y_1,\cdots, y_d) \in \mathbb R^d, $ we put $x\cdot y = \sum_{i=1}^{d} x_i y_i.$ Let $\mathcal{F}:\mathcal{S}(\mathbb R^{d})\to \mathcal{S}(\mathbb R^{d})$ be the Fourier transform  defined by  
\begin{eqnarray*}
\mathcal{F}f(\xi)=\widehat{f}(\xi)=  (2\pi)^{-d/2} \int_{\mathbb R^{d}} f(x) e^{- i  x \cdot \xi}dx, \  \xi \in \mathbb R^d.
\end{eqnarray*}
Then $\mathcal{F}$ is a bijection  and the inverse Fourier transform  is given by
\begin{eqnarray*}
\mathcal{F}^{-1}f(x)=f^{\vee}(x)= (2\pi)^{-d/2}\int_{\mathbb R^{d}} f(\xi)\, e^{ i x\cdot \xi} d\xi,~~x\in \mathbb R^{d}.
\end{eqnarray*}
It is well known that the  Fourier transform can be uniquely extended to $\mathcal{F}:\mathcal{S}'(\mathbb R^d) \to \mathcal{S}'(\mathbb R^d).$

\subsection{Heisenberg group and Fourier multipliers}\label{HP}
We consider the Heisenberg group $\mathbb H^{d}=\mathbb C^{d} \times \mathbb R$ with the group law
$$(z,t)(w,s)=(z+w, t+s+\frac{1}{2}  \text{Im} (z\cdot \bar{w})).$$
Sometimes we use real coordinates $ (x,y,t) $ instead of $ (z,t) $ if $ z = x+iy, x,y \in \R^d.$
In order to define the (group) Fourier transform on the Heisenberg group we briefly recall the following family of irreducible unitary representations. For each non-zero real $\lambda,$ we have a representation $ \pi_\lambda $ realised on $ L^2(\R^d) $ as follows:
$$ \pi_\lambda(z,t)\varphi(\xi) = e^{i\lambda t} e^{i(x \cdot \xi+\frac{1}{2}x \cdot y)} \varphi(\xi+y)  $$ where $ \varphi \in L^2(\R^d).$
For $ f \in L^1(\Hb^d),$ one defines the operator
$$ \hat{f}(\lambda) = \int_{\Hb^d} f(z,t) \pi_\lambda(z,t) dz dt.$$
The operator valued function $ f \rightarrow \hat{f}(\lambda) $ is called the group Fourier transform of $ f $ on $ \Hb^d.$ We refer to \cite{ST2} for more about the group Fourier transform.

The Fourier transform initially defined on $ L^1(\Hb^d)\cap L^2(\Hb^d) $ can be extended to the whole of $ L^2(\Hb^d)$ and we have a version of Plancherel theorem. Specifically, when $f\in L^1\cap L^{2}(\mathbb H^d),$ it can be shown that $\hat{f}(\lambda)$ is a Hilbert-Schmidt operator and the Plancherel theorem holds:
$$ \int_{\mathbb H^d} |f(z,t)|^2 dz dt = \frac{2^{d-1}}{\pi^{d+1}} \int_{-\infty}^{\infty}  \|\hat{f}(\lambda) \|_{HS}^2 |\lambda|^d d\lambda$$
where $\|\cdot\|_{HS}$ is the Hilbert-Schmidt norm given by $\|T\|_{HS}^{2}= {tr} (T^*T),$ for $T$ a bounded operator,  $T^*$ being the adjoint operator of $T.$
Under the assumption that $ \hat{f}(\lambda) $ is of trace class we have the inversion formula
$$ f(z,t) = (2\pi)^{-d-1} \int_{-\infty}^\infty tr(\pi_\lambda(z,t)^* \hat{f}(\lambda)) |\lambda|^d d\lambda.$$ 
Let $ f^\lambda $ stand for the inverse Fourier transform of $ f $ in the \textit{central variable} $t$
\begin{equation}
\label{eq:inverseFT}
f^\lambda(z) = \int_{-\infty}^\infty f(z,t) e^{i\lambda t} dt.
\end{equation}
By taking the Euclidean Fourier transform of $ f^\lambda(z)$  in the variable $\lambda$, we obtain
\begin{equation}
\label{eq:lambdaFT}
f(z,t)=\frac{1}{2\pi}\int_{-\infty}^{\infty}e^{-i\lambda t}f^{\lambda}(z)\,d\lambda.
\end{equation}
Recalling the definition of $ \pi_\lambda $ we see that we can write the group Fourier transform as $$ \hat{f}(\lambda) = \pi_\lambda(f^\lambda) $$ where  
$ \pi_\lambda(f^\lambda) = \int_{\mathbb C^d} f^\lambda(z) \pi_\lambda(z,0) dz.$ The operator which takes  a function $ g $ on $ \C^d $ into the operator
$$ \int_{\C^d} g(z) \pi_\lambda(z,0) dz $$ is called the Weyl transform of $ g $ and is denoted by $ W_\lambda(g)$. Thus $ \widehat{f}(\lambda) = W_\lambda(f^\lambda)$.
With these notations we can rewrite the inversion formula as
$$ f(z,t) = (2\pi)^{-d-1} \int_{-\infty}^\infty  e^{-i\lambda t} tr(\pi_\lambda(z,0)^*\pi_\lambda(f^\lambda)) |\lambda|^d d\lambda.$$

On the Heisenberg group, we have the  vector fields

$$T= \frac{\partial}{\partial t},~~~X_{j}= \frac{\partial}{\partial x_{j}} +\frac{1}{2} y_{j} \frac{\partial}{\partial t}, ~~~Y_{j}= \frac{\partial}{\partial y_j}-\frac{1}{2} x_{j}  \frac{\partial}{\partial t}, \ \ j =1,..., d,$$ and they form a basis for the Lie algebra of left invariant vector fields on the Heisenberg group.  The  second-order operator
$$\mathcal{L}= - \sum_{j=1}^{d} (X_{j}^2+ Y_{j}^2)$$
is called the  sublaplacian which  is self-adjoint  and nonnegative  and hence admits a spectral decomposition
$$\mathcal{L}= \int_{0}^{\infty} \lambda  \ dE_{\lambda}.$$
Given a bounded function $m$  defined on $(0, \infty)$ one can define the operator $m(\mathcal{L})$ formally by setting 
$$m(\mathcal{L})f= \int_{0}^{\infty} m(\lambda) \  dE_{\lambda}f.$$
It can be shown that $ \widehat{m(\mathcal{L})f}(\lambda) = \hat{f}(\lambda)m(H(\lambda)) $ where $ H(\lambda) = -\Delta+|\lambda^2 |x|^2 $ are the scaled Hermite operators. Hence, the operators $ m(\mathcal{L})f $ are examples of Fourier multipliers on the Heisenberg group. More generally, (right) Fourier multipliers on the Heisenberg group are operators defined by $ \widehat{T_Mf}(\lambda) = \hat{f}(\lambda) M(\lambda) $ where $ M(\lambda),$  called the multiplier is a family of bounded linear operators on $ L^2(\R^d).$

The boundedness properties of these operators have been studied by several authors, see e.g. \cite{HW, MS, HWJC}. We make use of the following result.
Let $0\neq \psi \in C_{0}^{\infty} (\mathbb R^{+})$ be a fixed cut-off function with support contained in the interval $[\frac{1}{2}, 1],$ and define the scale invariant localized Sobolev norm of order $\beta$ of $m$ by 
$$\|m\|_{L^{2}_{\beta, sloc}}= \sup_{t>0} \|\psi m(t \cdot)\|_{L^{2}_{\beta}}.$$

\begin{theorem}[M\"uller-Stein, Hebisch \cite{MS,HWJC}]\label{MSH} If $\|m\|_{L^{2}_{\beta, sloc }} < \infty$ for some $\beta > (2d+1)/2,$ then $m(\mathcal{L})$ is bounded on  $L^{p}(\mathbb H^{d})$ for $1<p<\infty,$ and of weak type $(1,1).$ 
\end{theorem}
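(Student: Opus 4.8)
The plan is to treat $m(\mathcal L)$ as a convolution singular integral on $\Hb^d$ and run the Calderón--Zygmund machinery on the space of homogeneous type $(\Hb^d, \rho, dz\,dt)$, where $\rho$ is the Carnot--Carath\'eodory metric attached to the vector fields $X_j, Y_j$. Since $m$ is bounded, the spectral theorem gives at once that $m(\mathcal L)$ is bounded on $L^2(\Hb^d)$. Because $\mathcal L$ is left invariant, $m(\mathcal L)f = f * K$ for a convolution kernel $K$, and the problem reduces to verifying the H\"ormander integral condition
\[
\sup_{y \in \Hb^d}\ \int_{\rho(x) > 2\rho(y)} |K(y^{-1}x) - K(x)|\, dx < \infty ;
\]
together with the $L^2$ bound this yields boundedness on $L^p$ for $1<p<\infty$ and weak type $(1,1)$ by the standard theory on spaces of homogeneous type. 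First I would decompose $m$ dyadically, adapted to the parabolic dilations $\delta_r(z,t)=(rz,r^2t)$ under which $\mathcal L$ is homogeneous of degree $2$: fixing $\psi$ supported in $[\tfrac12,1]$ with $\sum_{j\in\Z}\psi(2^{-j}\lambda)=1$, set $m_j(\lambda)=m(\lambda)\psi(2^{-j}\lambda)$ and let $K_j$ be the kernel of $m_j(\mathcal L)$. The hypothesis $\|m\|_{L^2_{\beta,sloc}}<\infty$ is exactly a uniform-in-$j$ bound on the rescaled pieces $\|m_j(2^j\,\cdot)\|_{L^2_\beta}$, so it suffices to prove the H\"ormander bound for each $K_j$ with a constant summable in $j$ after accounting for scaling.

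Second, and this is the crux, I would establish weighted $L^2$ estimates for the kernels $K_j$. A weighted Plancherel estimate controlling $\int_{\Hb^d} |K_j(x)|^2\, w(x)\, dx$ by a Sobolev norm of $m_j$, followed by Cauchy--Schwarz against $w^{-1}$, produces the desired integrated difference bound. Using only Gaussian heat-kernel bounds and the homogeneous dimension $Q=2d+2$, as in the Mauceri--Meda estimate, one is forced to require $\beta > Q/2 = d+1$, which is too weak. The improvement to $(2d+1)/2$ rests on a sharper, Heisenberg-specific Plancherel estimate: through the joint functional calculus of $\mathcal L$ and $T=\partial/\partial t$, the group Fourier transform reduces $m(\mathcal L)$ to multiplication by $m(H(\lambda))$ with $H(\lambda)=-\Delta+\lambda^2|x|^2$ the scaled Hermite operator, so that, via \eqref{eq:inverseFT}--\eqref{eq:lambdaFT} and the Weyl transform, $K_j$ admits an explicit expansion in terms of Laguerre functions. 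Exploiting the additional integration in the central variable $\lambda$ together with precise Laguerre asymptotics lets one measure the kernel's decay through the topological dimension $2d+1$ rather than the homogeneous dimension $2d+2$, which is exactly what the exponent $\beta > (2d+1)/2$ reflects.

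To convert smoothness of $m$ into spatial decay of $K_j$ I would invoke finite propagation speed: $\cos(\tau\sqrt{\mathcal L})$ propagates at finite speed with respect to $\rho$, so that if the Fourier transform of $G$ is supported in $[-R,R]$ then the kernel of $G(\sqrt{\mathcal L})$ is supported in $\{\rho \le R\}$. Splitting each $m_j$ by this device trades derivatives of $m$ (controlled by the localized Sobolev norm) for localization of $K_j$, so that the weighted $L^2$ estimate of the previous step upgrades to the H\"ormander difference bound with geometric decay in $j$. Summing over $j$ and applying the Calderón--Zygmund theorem on $(\Hb^d,\rho,dz\,dt)$ then finishes the proof, giving boundedness on $L^p$ for $1<p<\infty$ and weak type $(1,1)$. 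The hard part will be precisely the sharp threshold: with generic heat-kernel methods one is stuck at $\beta>d+1$, and the whole subtlety of M\"uller--Stein (respectively Hebisch's analytic-interpolation route) lies in the refined Plancherel/restriction estimate for the sublaplacian that detects the Euclidean dimension $2d+1$ in place of the homogeneous dimension $2d+2$.
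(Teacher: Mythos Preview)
The paper does not prove this theorem at all: Theorem~\ref{MSH} is stated with attribution to M\"uller--Stein and Hebisch, cited as \cite{MS,HWJC}, and then immediately used as a black box (``We make use of this theorem in proving our main result''). There is therefore no proof in the paper to compare your proposal against.

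That said, your sketch is a faithful outline of how the cited proofs actually proceed. The Calder\'on--Zygmund framework on $(\Hb^d,\rho,dz\,dt)$, the dyadic decomposition $m=\sum_j m_j$ adapted to the automorphic dilations, and the identification of the crux as a weighted $L^2$/Plancherel estimate that detects the topological dimension $2d+1$ rather than the homogeneous dimension $2d+2$ are exactly the ingredients in M\"uller--Stein \cite{MS}; Hebisch's argument \cite{HWJC} reaches the same threshold by a different route (analytic families and complex interpolation in the spirit of Stein). Your remark that generic heat-kernel methods only give $\beta>d+1$ (the Mauceri--Meda/Christ range) and that the improvement to $\beta>(2d+1)/2$ is the genuinely hard part is accurate. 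The finite-propagation-speed device you invoke is one standard way to localize the kernels, though M\"uller--Stein work more directly with the explicit Laguerre expansion of the heat kernel. As a proof \emph{sketch} your proposal is sound; as a self-contained proof it would of course require carrying out the sharp Plancherel estimate in detail, which is the substantial part of \cite{MS}.
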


\noindent
We make use of this theorem in proving our main result. Actually we need an analogue of the above result in the context of polarised Heisenberg group
$ \Hb^d_{pol} $ which is just $ \R^d \times \R^d \times \R $ with the group law
$$ (x,y,t)(x',y',t') = (x+x',y+y', t+t'+x'\cdot y).$$

\noindent
A basis for the algebra of  left invariant vectors fields on this group are given by
$$T= \frac{\partial}{\partial t},~~ \tilde{ X}_{j}= \frac{\partial}{\partial x_{j}} + y_{j} \frac{\partial}{\partial t}, ~~~\tilde{Y}_{j}= \frac{\partial}{\partial y_j}, \ \ j =1,..., d.$$ The sublaplacian is then defined as the second-order operator
$$\mathcal{\tilde{L} }= - \sum_{j=1}^{d} (\tilde{X}_{j}^2+ \tilde{Y}_{j}^2).$$
The group $  \Hb^d_{pol} $ is isomorphic to $ \Hb^d $ and the isomorphism is given by the map $ \Phi : \Hb^d \rightarrow \Hb^d_{pol},~~
\Phi(x,y,t) = (x,y, t+ \frac{1}{2}  x \cdot y). $ Note that $ \Phi $ is measure preserving and it is easy to check that
$$  \mathcal L (f \circ \Phi^{-1}) = \tilde{\mathcal L}f \circ \Phi$$ for reasonable functions $ f $ on $ \Hb^d_{pol}.$ In view of this, an analogue of Theorem \ref{MSH} is true for $ m(\tilde{\mathcal L}) .$ This also follows from the fact that the above theorem is valid in a more general context of H-type groups, see \cite{MRS, HWJC, HW}.

Using the isomorphism $ \Phi $ we can define the following family of representations for $ \Hb^d_{pol} .$ For each non zero real $ \lambda $, the representations  $ \rho_\lambda = \pi_\lambda \circ \Phi^{-1} $ are irreducible and unitary. We can use them to define Fourier transform on the group $ \Hb^d_{pol} .$

\subsection{Hermite and special Hermite functions} The spectral decomposition of $H=-\Delta + |x|^2$ is given by  the Hermite expansion. Let $\Phi_{\alpha}(x),  \  \alpha \in \mathbb N^d$ be the normalized Hermite functions which are products of one dimensional Hermite functions. More precisely, 
$ \Phi_\alpha(x) = \Pi_{j=1}^d  h_{\alpha_j}(x_j) $ 
where 
$$ h_k(x) = (\sqrt{\pi}2^k k!)^{-1/2} (-1)^k e^{\frac{1}{2}x^2}  \frac{d^k}{dx^k} e^{-x^2}.$$ The Hermite functions $ \Phi_\alpha $ are eigenfunctions of $H$ with eigenvalues  $(2|\alpha| + d)$  where $|\alpha |= \alpha_{1}+ ...+ \alpha_d.$ Moreover, they form an orthonormal basis for $ L^2(\R^d).$ The spectral decomposition of $ H $ is then written as
$$ H = \sum_{k=0}^\infty (2k+d) P_k,~~~~ P_kf(x) = \sum_{|\alpha|=k} \langle f,\Phi_\alpha\rangle \Phi_\alpha$$ 
where $\langle\cdot, \cdot \rangle $ is the inner product in $L^2(\mathbb{R}^d)$. Given a function $m$ defined and bounded  on the set of all natural numbers we can use the spectral theorem to define $m(H).$ The action of $m(H)$ on a function $f$ is given by 
$$m(H)f= \sum_{\alpha \in \mathbb N^d} m(2|\alpha| +d) \langle f, \Phi_{\alpha} \rangle \Phi_{\alpha} = \sum_{k=0}^\infty m(2k+d)P_kf.$$
This operator  $m(H)$ is bounded on $L^{2}(\mathbb R^d).$ This follows immediately from the Plancherel theorem for the Hermite expansions as  $m$ is bounded. 

On the other hand, the mere boundedness of $m$ is not sufficient  to imply  the $L^{p}$ boundedness of $m(H)$ for $p\neq 2.$ 
So, we need to impose  some conditions $m$ to ensure that $m(H)$ is bounded on $L^{p}(\mathbb R^d).$   The boundedness results of $m(H)$ on $L^{p}(\mathbb R^d)$ have been studied by several authors,  see e.g. \cite{ST1, ST}. Our aim in this paper is to study the boundedness of $ m(H) $ on modulation spaces.

In the sequel, we make use of some properties of special Hermite functions $ \Phi_{\alpha, \beta} $ which are defined as follows. Let $ \pi(z) = \pi_1(z,t) $ and define
\begin{eqnarray}\label{SHF}
\Phi_{\alpha,\beta}(z) =  (2\pi)^{-d/2} \langle \pi(z)\Phi_\alpha,\Phi_\beta\rangle.
\end{eqnarray}
Then it is well known that these so called special Hermite functions form an orthonormal basis for $ L^2(\mathbb C^d).$ Moreover, they are eigenfunctions of the special Hermite operator $ L $ which is defined by the equation $$ \mathcal L(f(z)e^{it}) = e^{it} Lf(z).$$ Indeed, we  can show (\cite[Theorem 1.3.3]{ST}) that $$ L\Phi_{\alpha,\beta} =(2|\alpha|+d)\Phi_{\alpha, \beta}.$$
For a function  $f\in L^2(\mathbb C^d)$ we have the eigenfunction expansion
\begin{eqnarray*}
f(z)= \sum_{\alpha \in \mathbb N^d} \sum_{\beta \in \mathbb N^d} \langle f, \Phi_{\alpha, \beta} \rangle \Phi_{\alpha, \beta} (z)
\end{eqnarray*}
which is called the special Hermite expansion.

Let $f$ and $g$ be two measurable functions on  $\mathbb C^d.$ We recall that the $\lambda-$twisted convolution ($ 0\neq \lambda \in \mathbb R$) of  $f$ and $g$ is the function $f\ast_{\lambda} g$ defined by 
$$f\ast_{\lambda} g (z)= \int_{\mathbb C^d}f(z-w) g(w)e^{i \frac{\lambda}{2} \text{Im} (z\cdot \bar{w})} dw$$
for all $z$ such that the integral exits. When $\lambda =1,$ we simply call them twisted convolution and denote them by $f\times g.$ It is well known that the twisted convolution of special Hermite functions satisfies \cite[Proposition 1.3.2]{ST} the following relation
\begin{eqnarray}\label{ptc}
\Phi_{\alpha, \beta} \times \Phi_{\mu, v} =  (2\pi)^{d/2} \delta_{\beta, \mu} \Phi_{\alpha, v}
\end{eqnarray}
where $\delta_{\beta, \mu}=1$ if $\beta= \mu$, otherwise it is 0.
Using  the identity \eqref{ptc}, we can show that the  special Hermite expansions can be written (\cite[Section 2.1]{ST}) in the compact form as follows:
\begin{eqnarray*}
f(z) & = &  (2\pi)^{-d/2} \sum_{\alpha \in \mathbb N^d} f\times \Phi_{\alpha, \alpha}(z) \\
& = &  (2\pi)^{-d/2} \sum_{k=0}^{\infty} f\times \left( \sum_{|\alpha|=k} \Phi_{\alpha, \alpha }(z) \right)\\
  & = & (2\pi)^{-d} \sum_{k=0}^{\infty} f\times \phi_k(z) 
\end{eqnarray*}
where $ \varphi_k$  are the Laguerre functions of type $(d-1)$:
$$ \varphi_k(z)  = (2\pi)^{d/2} \sum_{|\alpha|=k} \Phi_{\alpha, \alpha }(z)=L_k^{d-1}(\frac{1}{2}|z|^2) e^{-\frac{1}{4}|z|^2}.$$
We note that $f\times \Phi_{\alpha, \alpha}$ is an eigenfunction of the operator $L$
with the eigenvalue  ($2|\alpha|+d$). Hence $ (2\pi)^{-d} f\times \phi_k$ is the projection of $f$ onto the eigenspace corresponding  to the eigenvalue $(2k+d).$
The spectral decomposition of the operator $ L $ is given by the special Hermite functions and we can write the same in a compact form as
$$ Lf(z) = (2\pi)^{-d} \sum_{k=0}^\infty (2k+d) f \times \varphi_k(z).$$ 
As in the case of the Hermite operators, one can define and study special Hermite multipliers $ m(L),$  see \cite{ST}. The functions $ \Phi_{\alpha, \beta} $ can be expressed in terms of Laguerre functions. In particular, we have (\cite[Theorem 1.3.5]{ST})
\begin{eqnarray}\label{EPSHF}
 \Phi_{\alpha,0}(z) =  (2\pi)^{-d/2} (\alpha !)^{-1/2} \left(\frac{i}{\sqrt{2}} \right)^{|\alpha|}\bar{z}^{\alpha} e^{-\frac{1}{4} |z|^2}.
\end{eqnarray} 

We also have to deal with the family of operators $ H(\lambda) = -\Delta+\lambda^2 |x|^2$ whose eigenfunctions are given by  scaled Hermite functions.   For $\lambda\in \R^*$ and each $ \alpha \in \mathbb{N}^d $,  we define the family of scaled Hermite functions
$$
\Phi_\alpha^\lambda(x) = |\lambda|^{\frac{d}{4}}\Phi_\alpha(\sqrt{|\lambda|}x), \quad x\in \mathbb{R}^d.
$$
They form an orthonormal basis for $L^2(\mathbb R^d).$ The spectral decomposition of the scaled Hermite operator $ H(\lambda) =  -\Delta+|\lambda|^2 |x|^2 $  is then written as 
\begin{equation}
\label{eq:spectralHermite}
H(\lambda) = \sum_{k=0}^\infty (2k+d)|\lambda| P_k(\lambda),
\end{equation}
for $\lambda \in \mathbb{R}^*$, where $ P_k(\lambda) $ are the (finite-dimensional) orthogonal projections defined on $  L^2(\mathbb{R}^d)$ by
$$
P_k(\lambda)f = \sum_{|\alpha| =k} \langle f,\Phi_\alpha^\lambda \rangle \Phi_\alpha^\lambda,
$$
where $ f \in L^2(\mathbb{R}^d)$. We  can now define scaled special Hermite functions
\begin{eqnarray}
\Phi_{\alpha, \beta}^{\lambda}(z)= (2\pi)^{-d/2}|\lambda|^{d/2} \langle \pi_{\lambda}(z)\Phi_{\alpha}^{\lambda}, \Phi_{\beta}^{\lambda} \rangle.
\end{eqnarray}
They form a complete orthonormal system in $L^2(\mathbb  C^d).$ 
For every $f\in L^{2}(\mathbb C^d),$ we have  the  special Hermite expansion
\begin{eqnarray}
f= \sum_{\alpha \in \mathbb N^d} \sum_{\beta\in \mathbb N^d} \langle f, \Phi_{\alpha, \beta}^{\lambda} \rangle  \Phi_{\alpha, \beta}^{\lambda}.
\end{eqnarray}
We now define the  (scaled) special Hermite operator (twisted Laplacian)  by the relation
\begin{eqnarray}\label{sho}
\mathcal{L}(e^{i\lambda t} f(z)) = e^{i\lambda t} L_{\lambda} f(z). 
\end{eqnarray}
We note that the operators $L_{\lambda}$ and $H(\lambda)$ are related  via the Weyl transform:
$$  \pi_\lambda(L_\lambda f) = \pi_\lambda(f)H(\lambda).$$
In fact, the scaled Hermite functions
are eigenfunctions of the operator $L_{\lambda}:$
\begin{eqnarray}
L_{\lambda} \Phi_{\alpha, \beta}^{\lambda}(z)= (2|\alpha| +d) |\lambda| \Phi_{\alpha, \beta}^{\lambda}(z).
\end{eqnarray}
Note that the eigenvalues of $L_{\lambda}$ are of the form  $(2k+d)|\lambda|, k=0,1, 2,...,$ and the $k^{th}$ eigenspace corresponding to the eigenvalue $(2k+d)|\lambda|$ is infinite-dimensional being the span of $\{ \Phi_{\alpha, \beta}^{\lambda}:|\alpha|=k, \beta \in \mathbb N^d \}.$  It is well known that  the special Hermite  expansion  can be written in the  compact form  as follows:
\begin{eqnarray}
f(z)=(2\pi)^{-d} |\lambda|^d \sum_{k=0}^\infty  f*_\lambda \varphi_k^\lambda(z)
\end{eqnarray}
where $\varphi_{k}^{\lambda}$ is  the scaled Laguerre functions of type $ (d-1)$
\begin{equation}
\label{eq:Laguerre}
\varphi_k^\lambda(z) = L_k^{n-1}\Big(\frac12 |\lambda||z|^2\Big)e^{-\frac14 |\lambda||z|^2}.
\end{equation}
The spectral decomposition of $L_{\lambda}$ is then written as 
$$   L_\lambda f(z) = (2\pi)^{-d} |\lambda|^d \sum_{k=0}^\infty (2k+d)|\lambda|  f*_\lambda \varphi_k^\lambda(z).$$
In particular, recalling $f^{\lambda}$  (see\eqref{eq:inverseFT}), we have
\begin{eqnarray}\label{dso}
L_\lambda f^{\lambda}(z) = (2\pi)^{-d} |\lambda|^d \sum_{k=0}^\infty (2k+d)|\lambda|  f^{\lambda}*_\lambda \varphi_k^\lambda(z).
\end{eqnarray}  
Taking the Fourier transform in  $\lambda$ variable in equation \eqref{dso}, and using \eqref{sho} and  \eqref{eq:lambdaFT}, we obtain the spectral decomposition of the sublaplacian $\mathcal{L}$ on $\mathbb H^d$ as follows:
\begin{equation*}
\label{eq:spectral}
\mathcal{L}f(z,t) = (2\pi)^{-d-1} \int_{-\infty}^\infty \Big( \sum_{k=0}^\infty (2k+d)|\lambda|  f^\lambda*_\lambda \varphi_k^\lambda(z)\Big) e^{-i\lambda t}  |\lambda|^d d\lambda.
\end{equation*}
For more on spectral theory of sublaplacian on Heisenberg  group, we refer to \cite[Chapter 2]{ST3}. 

\subsection{A transference theorem for $ m(\mathcal L)$}\label{tml} Let $ \Gamma_0$ be the subgroup of $ \Hb^d $ consisting of elements of the form $ (0,0, 2\pi k) $ where $ k \in \Z.$ Then $ \Gamma_0$  is a central subgroup and the quotient $ \Hb^d/ \Gamma_0 $ whose underlying manifold is $ \mathbb{R}^d \times \mathbb{R}^d \times \T $ is called the Heisenberg group with compact center.(See \cite[Chapter 4]{ST3})
Let $\mathcal{L}_0$ be the sublaplacian on $\mathbb H^d/\Gamma_0.$ Note that $\mathcal{L}_0f= \mathcal{L}f,$ considering $f$ as a function on $\mathbb H^d.$

Note that on functions $ g $ which are independent of $ t,$ $ \mathcal L_0 g(z) = -\Delta g(z) $ where $ \Delta $ is the Euclidean Laplacian on $ \mathbb C^d.$ The spectral decomposition of the sublaplacian $ \mathcal L_0 $ on this group is given by
$$  \mathcal L_0 f(z,t) = (-\Delta)f^0(z)+ (2\pi)^{-d-1} \sum_{k\in \Z \setminus \{0\}} \Big(\sum_{j=0}^\infty  e^{-ikt} ((2j+d)|k|) f^k\ast_k \varphi_j^k(z)\Big) |k|^d$$  
where $\varphi_j^k(z)= \varphi_j(\sqrt{|k|} z)$ and $f^k$ is defined by the equation 
$f\ast e_j^{k}(z,t)=e^{-ikt} f^k\ast_{k}\varphi_j^k(z),$ $e_{j}^k(z,t)=e^{ikt}\varphi_{j}^{k}(z).$ More generally,  
$$  m(\mathcal L_0) f(z,t) = m(-\Delta)f^0(z) +(2\pi)^{-d-1} \sum_{k\in \Z\setminus \{0\}} \Big(\sum_{j=0}^\infty  e^{-ikt} m((2j+d)|k|) f^k\ast_k \varphi_j^k(z)\Big) |k|^d.$$
There is a transference theorem which connects $ m(\mathcal L) $ on $ \Hb^d $ with $ m(\mathcal L_0) $ on $ \Hb^d/\Gamma_0$ which is the analogue of the classical de Leeuw's theorem (See Theorem \ref{dl}) on the real line.

\begin{theorem}\label{TP} Let $ 1 < p < \infty.$ Suppose $ m(\mathcal L) $ is a bounded operator on $ L^p(\Hb^d).$ Then the transferred operator $ m(\mathcal L_0) $ is bounded on $ L^p(\Hb^d/\Gamma_0).$
\end{theorem}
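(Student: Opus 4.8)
The plan is to mimic the proof of de~Leeuw's theorem (Theorem~\ref{dl}), with the central variable playing the role of the frequency variable on the line. On $\Hb^d$ the spectral decomposition of $m(\mathcal L)$ runs over a continuous central parameter $\lambda\in\R$, whereas on $\Hb^d/\Gamma_0$ the compactness of the center quantizes this parameter to $k\in\Z$; passing from the former to the latter is precisely the transference $\R\to\T$. Since the linear span of functions of the form $g(z,t)=\sum_{|k|\le N} e^{-ikt}g^k(z)$ with $g^k\in\mathcal S(\C^d)$ is dense in $L^p(\Hb^d/\Gamma_0)$, it suffices to prove $\|m(\mathcal L_0)g\|_{L^p(\Hb^d/\Gamma_0)}\le A\,\|g\|_{L^p(\Hb^d/\Gamma_0)}$ for such $g$, where $A$ denotes the operator norm of $m(\mathcal L)$ on $L^p(\Hb^d)$.

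Fix a Gaussian (or Schwartz) envelope $\phi$ on $\R$ and, for $\epsilon>0$, transplant $g$ to a genuine function on $\Hb^d$ by setting $G_\epsilon(z,t)=g(z,t)\,\phi(\epsilon t)$. Taking the inverse Fourier transform in the central variable (see~\eqref{eq:inverseFT}) one finds that $G_\epsilon^\lambda(z)=\sum_k g^k(z)\,\epsilon^{-1}\check\phi\big((\lambda-k)/\epsilon\big)$, with $\check\phi$ the Fourier transform of $\phi$, so that $G_\epsilon^\lambda$ concentrates, as $\epsilon\to 0$, in windows of width $\sim\epsilon$ about the integers appearing in $g$. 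Inserting this into the spectral formula for $m(\mathcal L)$ displayed above, substituting $\lambda=k+\epsilon\eta$ on each window, and using the continuity of the data $\lambda\mapsto m((2j+d)|\lambda|),\ \varphi_j^\lambda,\ \ast_\lambda,\ |\lambda|^d$ at $\lambda=k$, the $\lambda$-integral over each window should reconstruct the factor $e^{-ikt}\phi(\epsilon t)$; summing over $k$ (the $k=0$ window producing the $m(-\Delta)g^0$ term through $L_\lambda\to-\Delta$) leads to the target identity
$$ m(\mathcal L)G_\epsilon(z,t) = \big[m(\mathcal L_0)g\big](z,t)\,\phi(\epsilon t)+E_\epsilon(z,t), $$
where $E_\epsilon$ is an error term to be controlled.

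It then remains to compare norms. By periodicity of $g$ in $t$ and a Riemann-sum argument, for fixed $z$ one has $\epsilon\int_\R|g(z,t)|^p|\phi(\epsilon t)|^p\,dt\to (2\pi)^{-1}\|\phi\|_{L^p(\R)}^p\int_0^{2\pi}|g(z,t)|^p\,dt$, and integrating in $z$ gives
$$ \lim_{\epsilon\to 0}\ \epsilon\,\|G_\epsilon\|_{L^p(\Hb^d)}^p = \frac{\|\phi\|_{L^p(\R)}^p}{2\pi}\,\|g\|_{L^p(\Hb^d/\Gamma_0)}^p. $$
The same computation applied to $\big[m(\mathcal L_0)g\big]\phi(\epsilon\cdot)$ produces the analogous limit with $g$ replaced by $m(\mathcal L_0)g$. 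Combining these with the boundedness inequality $\|m(\mathcal L)G_\epsilon\|_{L^p(\Hb^d)}\le A\,\|G_\epsilon\|_{L^p(\Hb^d)}$, raising to the $p$-th power, multiplying by $\epsilon$, and letting $\epsilon\to 0$ yields $\|m(\mathcal L_0)g\|_{L^p(\Hb^d/\Gamma_0)}\le A\,\|g\|_{L^p(\Hb^d/\Gamma_0)}$, which is the desired estimate; density finishes the proof.

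The main obstacle is the approximation step, namely showing $\epsilon\,\|E_\epsilon\|_{L^p(\Hb^d)}^p\to 0$. This requires uniform control, over the effective support $|t|\lesssim\epsilon^{-1}$ of the envelope and summed over the Laguerre index $j$, of the discrepancy between $m((2j+d)|\lambda|)\,g^k\ast_\lambda\varphi_j^\lambda$ at $\lambda=k+\epsilon\eta$ and its value at $\lambda=k$; here one needs the continuity of $m$ at the points $(2j+d)|k|$ and of the scaled Laguerre functions and twisted convolutions in $\lambda$, together with the rapid decay of $\check\phi$ to suppress the interaction between distinct windows. The behaviour near $\lambda=0$, where the twisted convolution degenerates to ordinary convolution and $L_\lambda\to-\Delta$, must be treated separately in order to recover the $k=0$ term $m(-\Delta)g^0$.
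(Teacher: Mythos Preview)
Your proposal is correct and takes essentially the same approach as the paper: both regard $m(\mathcal L)$ and $m(\mathcal L_0)$ as operator-valued Fourier multipliers in the central variable (on $\R$ and on $\T$, respectively), and then imitate the classical de~Leeuw argument to pass from the line to the circle. Your sketch is in fact more detailed than the paper's own treatment, which simply cites \cite{PKST} and asserts that the de~Leeuw proof carries over; your explicit flagging of the error-term control and of the $\lambda\to 0$ degeneration (where $L_\lambda\to-\Delta$) identifies exactly the points one must check when writing out the details.
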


This theorem has been proved and used in \cite{PKST}. The idea is to realise  $ m(\mathcal L) $  ( $ m(\mathcal L_0) $ )  as an operator valued Fourier multiplier for $ \mathbb{R} $ (resp. $ \T.$). Indeed, writing the Fourier inversion as 
$$  f(z,t) = (2\pi)^{-1} \int_{-\infty}^\infty e^{-i\lambda t} f^\lambda(z) d\lambda $$ and recalling that $ L_\lambda $ is defined by the equation $ \mathcal L(f(z)e^{i\lambda t}) = e^{i\lambda t} L_\lambda f(z) $ we have
$$ m(\mathcal L) f(z,t) =  (2\pi)^{-1} \int_{-\infty}^\infty e^{-i \lambda t} m(L_\lambda) f^\lambda(z) d\lambda $$ 
where $ m(L_\lambda) $ is the multiplier transform associated to the elliptic operator $ L_\lambda $ which has an explicit spectral decomposition. By identifying $ L^p(\Hb^d) $ with $ L^p(\mathbb{R}, L^p(\mathbb C^d)) $ we see that, in view of the above equation, $ m(\mathcal L) $ can be considered as an operator valued Fourier multiplier for the Euclidean Fourier transform on $ \mathbb{R}.$ Similarly, $ m(\mathcal L_0) $ can be thought about as an operator valued multiplier for the Fourier series on $ \T.$ Now, it is an easy matter to imitate the proof of de Leeuw's theorem to prove the above result.

We need the following analogue of Theorem \ref{TP} for the reduced polarised Heisenberg group. Let $\mathcal{\tilde{L}}_0$ be the  sublaplacian on $G^d=\mathbb H^d_{pol}/\Gamma_0.$ We note that $\mathcal{\tilde{L}}_0f= \mathcal{ \tilde{L}}f$ considering $f$ as a function on  $\mathbb H^d_{pol}.$

\begin{theorem}\label{NTP} Let $ 1 < p < \infty.$ Suppose $ m(\mathcal {\tilde {L}}) $ is a bounded operator on $ L^p (\mathbb H^d_{pol}).$ Then the transferred operator $ m(\mathcal{\tilde{L}}_0) $ is bounded on $ L^p(\Hb^d_{pol}/\Gamma_0).$
\end{theorem}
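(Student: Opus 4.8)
The plan is to deduce Theorem \ref{NTP} directly from Theorem \ref{TP} by exploiting the isomorphism $\Phi:\Hb^d\to\Hb^d_{pol}$ introduced above. Recall that $\Phi$ is measure preserving and intertwines the two sublaplacians, so that the pullback $f\mapsto f\circ\Phi$ is an isometry of $L^p(\Hb^d_{pol})$ onto $L^p(\Hb^d)$ which conjugates $\tilde{\mathcal{L}}$ into $\mathcal{L}$, hence $m(\tilde{\mathcal{L}})$ into $m(\mathcal{L})$ for every bounded $m$. In particular $m(\tilde{\mathcal{L}})$ is bounded on $L^p(\Hb^d_{pol})$ precisely when $m(\mathcal{L})$ is bounded on $L^p(\Hb^d)$. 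The idea is to push this same isometry down to the quotients by $\Gamma_0$ and then invoke Theorem \ref{TP}.

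First I would check that $\Phi$ descends to the quotients. Since $\Phi(0,0,2\pi k)=(0,0,2\pi k)$, the map $\Phi$ fixes $\Gamma_0$ pointwise; as $\Gamma_0$ is central in both $\Hb^d$ and $\Hb^d_{pol}$, the isomorphism $\Phi$ commutes with the $\Gamma_0$-translations and therefore carries $\Gamma_0$-periodic functions to $\Gamma_0$-periodic functions. Hence $\Phi$ induces a measure preserving isomorphism $\bar\Phi:\Hb^d/\Gamma_0\to\Hb^d_{pol}/\Gamma_0=G^d$, explicitly the shear $(x,y,t)\mapsto(x,y,t+\tfrac12 x\cdot y)$ on $\R^d\times\R^d\times\T$. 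Because $\mathcal{L}_0$ and $\tilde{\mathcal{L}}_0$ are built from the same left invariant vector fields as $\mathcal{L}$ and $\tilde{\mathcal{L}}$ restricted to periodic functions, the intertwining relation descends, so composition with $\bar\Phi$ is an $L^p$-isometry conjugating $m(\tilde{\mathcal{L}}_0)$ into $m(\mathcal{L}_0)$. Chaining these equivalences with Theorem \ref{TP} — boundedness of $m(\tilde{\mathcal{L}})$ on $L^p(\Hb^d_{pol})$ is equivalent to that of $m(\mathcal{L})$ on $L^p(\Hb^d)$, which by Theorem \ref{TP} forces boundedness of $m(\mathcal{L}_0)$ on $L^p(\Hb^d/\Gamma_0)$, equivalent in turn to boundedness of $m(\tilde{\mathcal{L}}_0)$ on $L^p(G^d)$ — yields the theorem.

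On this route there is no serious obstacle beyond the bookkeeping of the last paragraph; the only point demanding care is confirming that the intertwining of the sublaplacians truly survives the passage to the quotient, i.e. that the spectral pieces recorded above (the $k=0$ term $-\Delta$ acting on $f^0$ together with the twisted pieces $f^k\ast_k\varphi_j^k$ for $k\neq 0$) correspond under $\bar\Phi$ on both groups. The genuine analytic content instead appears if one prefers to mimic the proof of Theorem \ref{TP} verbatim: using $\rho_\lambda=\pi_\lambda\circ\Phi^{-1}$ and the scaled twisted Laplacians $\tilde L_\lambda$ defined by $\tilde{\mathcal{L}}(e^{i\lambda t}f(z))=e^{i\lambda t}\tilde L_\lambda f(z)$, one realises $m(\tilde{\mathcal{L}})$ as an operator valued Fourier multiplier on $\R$ with symbol $\lambda\mapsto m(\tilde L_\lambda)$ acting on $X=L^p(\C^d)$, and $m(\tilde{\mathcal{L}}_0)$ as the multiplier on $L^p(\T;X)$ got by sampling this symbol at $\lambda\in\Z$. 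The boundedness on $\T$ then follows from that on $\R$ by the operator valued analogue of de Leeuw's restriction theorem, and here the delicate step — exactly as in the scalar case — is the sampling: one must know that $\lambda\mapsto m(\tilde L_\lambda)$ is sufficiently regular (strongly continuous) to restrict a continuous multiplier to the integers, which is supplied by the usual Gaussian regularization.
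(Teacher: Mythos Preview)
Your proposal is correct. The paper itself does not spell out a proof of Theorem \ref{NTP}; it simply presents it as the polarised analogue of Theorem \ref{TP}, whose proof sketch (realising $m(\mathcal{L})$ and $m(\mathcal{L}_0)$ as operator valued Fourier multipliers on $\R$ and $\T$ and imitating de Leeuw) is meant to carry over verbatim once one replaces $\pi_\lambda$ by $\rho_\lambda$ and $L_\lambda$ by $\tilde L_\lambda$.

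Your primary route is slightly different and in fact tidier: rather than rerunning the de Leeuw argument on $\Hb^d_{pol}$, you transport the whole statement back to $\Hb^d$ via the measure preserving isomorphism $\Phi$, observe that $\Phi$ fixes $\Gamma_0$ and hence descends to $\bar\Phi$ on the quotients, and then invoke Theorem \ref{TP} as a black box. This buys you the result with essentially no new analysis, at the cost only of the bookkeeping you flag (that the intertwining $\mathcal{L}(f\circ\Phi)=(\tilde{\mathcal{L}}f)\circ\Phi$ passes to the spectral calculus and to the quotient). The direct de Leeuw route you sketch at the end is exactly what the paper has in mind, and your remark about needing continuity of $\lambda\mapsto m(\tilde L_\lambda)$ for the sampling step is the right point of care there. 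Either route is acceptable; yours via $\Phi$ is the more economical of the two.
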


\subsection{Modulation  spaces}\label{MW}
In 1983, Feichtinger \cite{HG4} introduced a  class of Banach spaces,  the so called modulation spaces,  which allow a measurement of space variable and Fourier transform variable of a function or distribution on $\mathbb R^d$ simultaneously using the short-time Fourier transform(STFT). The  STFT  of a function $f$ with respect to a window function $g \in {\mathcal S}(\R^d)$ is defined by
$$  V_{g}f(x,y) = (2\pi)^{-d/2} \int_{\R^d} f(t) \overline{g(t-x)} \, e^{- i y\cdot t} \, dt,  ~  (x, y) \in \mathbb R^{2d} $$
whenever the integral exists. 
For $x, y \in \R^d$ the translation operator $T_x$ and the modulation operator $M_w$ are
defined by $T_{x}f(t)= f(t-x)$ and $M_{w}f(t)= e^{i y\cdot t} f(t).$ In terms of these
operators the STFT may be expressed as
\begin{eqnarray}
\label{ipform} V_{g}f(x,y)=\langle f, M_{y}T_{x}g\rangle\nonumber
\end{eqnarray}
 where $\langle f, g\rangle$ denotes the inner product for $L^2$ functions,
or the action of the tempered distribution $f$ on the Schwartz class function $g$.  Thus $V: (f,g) \to V_g(f)$ extends to a bilinear form on $\mathcal{S}'(\R^d) \times \mathcal{S}(\R^d)$ and $V_g(f)$ defines a uniformly continuous function on $\R^{d} \times \R^d$ whenever $f \in \mathcal{S}'(\R^d) $ and $g \in  \mathcal{S}(\R^d)$.
\begin{definition}[modulation spaces]\label{ms} Let $1 \leq p,q \leq \infty,$ and $0\neq g \in{\mathcal S}(\R^d)$. The  modulation space   $M^{p,q}(\R^d)$
is defined to be the space of all tempered distributions $f$ for which the following  norm is finite:
$$ \|f\|_{M^{p,q}}=  \left(\int_{\R^d}\left(\int_{\R^d} |V_{g}f(x,y)|^{p} dx\right)^{q/p} \, dy\right)^{1/q},$$ for $ 1 \leq p,q <\infty$. If $p$ or $q$ is infinite, $\|f\|_{M^{p,q}}$ is defined by replacing the corresponding integral by the essential supremum. 
\end{definition}

\begin{rem}
\label{equidm}
The definition of the modulation space given above, is independent of the choice of 
the particular window function.  See \cite[Proposition 11.3.2(c), p.233]{gro}. 
\end{rem}

Next, we shall see how the Fourier-Wigner transform  and the STFT are related.
Let $\pi$ be the Schr\"odinger representation of the Heisenberg group  with the parameter $\lambda =1$ which is realized on $L^{2}(\mathbb R^d)$ and explicitly given by 
$$\pi (x,y,t) \phi (\xi)=e^{it} e^{i(x\cdot \xi + \frac{1}{2}x\cdot y)} \phi (\xi +y)$$
where $x,y \in \mathbb R^d, t \in \mathbb R, \phi \in L^{2}(\mathbb R^d).$ The Fourier-Wigner transform of two functions $f, g\in L^{2}(\mathbb R^d)$ is defined by 
$$W_{g}f(x,y)= (2\pi)^{-d/2} \langle \pi (x,y, 0)f, g \rangle.$$
For $z=x+iy,$ we  put  $\pi(x,y, 0)= \pi (z, 0)= \pi(z).$ We may rewrite  the Fourier-Wigner transform as
$$W_{g}f(x,y)=\langle f, \pi^*(z)g \rangle,$$ 
where $\langle f, g\rangle$ denotes the inner product for $L^2$ functions,
or the action of the tempered distribution $f$ on the Schwartz class function $g$.  
We recall the representation  $\rho_1$ (See Section \ref{HP}) of $\mathbb{H}^d_{pol}$, $\rho_{1}=\rho=\pi \circ \Phi^{-1},$  $\rho (x,y, e^{it})$ acting on  $L^{2}(\mathbb R^d)$ is given by 
$$\rho(x,y, e^{it})\phi (\xi)= e^{it} e^{ix\cdot \xi} \phi (\xi +y), \ \  \phi \in L^{2}(\mathbb R^d).$$
We now write the Fourier-Wigner transform  in terms of  the STFT:  Specifically,
we put $\rho(x,y)\phi(\xi)= e^{ix\cdot \xi} \phi (\xi +y),$ and  have 
\begin{eqnarray}\label{ui}
\langle \pi(x,y)f, g \rangle = e^{\frac{i}{2}x\cdot y} \langle \rho(x,y)f, g\rangle= e^{-\frac{i}{2} x\cdot y} V_{g}f(y,-x).
\end{eqnarray}
This useful identity \eqref{ui} reveals that  the definition of modulation spaces we have introduced  in the introduction and in the present section  is essentially the same.

The following basic properties of modulation spaces are well-known   and for the proof we refer the reader  to \cite{gro, HG4}.

\begin{lemma}\label{pl}
\begin{enumerate}
\item The space $M^{p,q}(\mathbb R^{d})  (1\leq p \leq \infty)$  is a  Banach space.

\item  $M^{p,p}(\mathbb R^d)\hookrightarrow L^{p}(\mathbb R^d)$ for $1\leq p \leq 2,$ and $L^{p}(\mathbb R^d) \hookrightarrow M^{p,p}(\mathbb R^d)$ for $2\leq p \leq \infty.$

\item \label{inclu} If $q_{1}\leq q_{2}$ and $p_{1}\leq p_{2}$, then $M^{p_{1}, q_{1}}(\mathbb R^{d}) \hookrightarrow M^{p_{2}, q_{2}}(\mathbb R^{d})$.

\item \label{d} $\mathcal{S}(\mathbb R^d)$ is dense in $M^{p,q}(\mathbb R^d) (1\leq p,q < \infty).$ 
\end{enumerate}
\end{lemma}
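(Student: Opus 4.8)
The plan is to derive all four assertions directly from the mapping properties of the short-time Fourier transform, exploiting that, by Definition \ref{ms}, the map $f \mapsto V_g f$ is an isometry of $M^{p,q}(\R^d)$ into the mixed-norm space $L^{p,q}(\R^{2d})$, with the window independence of the norm already guaranteed by Remark \ref{equidm}.

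For (1) I would first record the reproducing/inversion formula: fixing $0 \neq g \in \mathcal{S}(\R^d)$ with $\|g\|_2 = 1$, every $F$ in the range of $V_g$ satisfies a reproducing identity $F = F \,\natural\, V_g g$, a twisted convolution on $\R^{2d}$ whose kernel $V_g g$ is Schwartz, hence in $L^1 \cap L^\infty$. Given a Cauchy sequence $(f_n)$ in $M^{p,q}$, the images $(V_g f_n)$ are Cauchy in the complete space $L^{p,q}(\R^{2d})$ and converge to some $F$; the reproducing identity passes to the limit and exhibits $F$ as lying in the range, so $F = V_g f$ for a tempered distribution $f$ recovered by inversion. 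Then $f_n \to f$ in $M^{p,q}$, proving completeness.

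For (2) I would anchor the endpoints and interpolate. Moyal's identity gives $\|V_g f\|_{L^2(\R^{2d})} = \|g\|_2 \|f\|_2$, so $M^{2,2} = L^2$ with equivalent norms. At $p=1$, the inversion formula together with $\|M_y T_x g\|_{L^1} = \|g\|_1$ yields $\|f\|_{L^1} \lesssim \|V_g f\|_{L^{1,1}} = \|f\|_{M^{1,1}}$, while at $p=\infty$ the pointwise bound $|V_g f(x,y)| \leq (2\pi)^{-d/2}\|g\|_1\|f\|_\infty$ gives $L^\infty \hookrightarrow M^{\infty,\infty}$. Complex interpolation of modulation spaces, combined with $[L^1,L^2]_\theta = L^p$, then produces $M^{p,p}\hookrightarrow L^p$ for $1\le p\le 2$, and either duality or interpolation between the inclusions $L^2 = M^{2,2}$ and $L^\infty \hookrightarrow M^{\infty,\infty}$ gives the reverse embedding for $2\le p\le \infty$.

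For (3) the clean route is the convolution relation $|V_g f| \lesssim |V_\gamma f| \ast |V_g \gamma|$ on $\R^{2d}$ valid for suitable windows, combined with Young's inequality: since $V_g \gamma$ is Schwartz and therefore lies in every $L^r$, one upgrades the integrability of $V_g f$ from indices $(p_1,q_1)$ to $(p_2,q_2)$. I expect this to be the main obstacle, precisely because ordinary Lebesgue spaces over $\R^d$ are \emph{not} nested, so the embedding cannot follow from monotonicity of the defining integrals; it is forced instead by the rapid decay of the reproducing kernel $V_g\gamma$. A perhaps more transparent equivalent argument discretizes via a Gabor frame, reducing the claim to the genuine nesting $\ell^{p_1,q_1} \subseteq \ell^{p_2,q_2}$ of sequence spaces. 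Finally, for (4), using that compactly supported smooth functions are dense in $L^{p,q}(\R^{2d})$ when $p,q<\infty$, I would pull such approximants back through the inversion formula and invoke strong continuity of the time-frequency shifts $M_y T_x$ on $M^{p,q}$ to conclude that finite superpositions of Schwartz time-frequency shifts of $g$, and hence $\mathcal{S}(\R^d)$, are dense. Since all four parts are classical, in the write-up one may simply cite \cite{gro, HG4}, but the above is the route I would reconstruct.
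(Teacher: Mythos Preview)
Your proposal is correct, and in fact goes well beyond what the paper does: the paper offers no proof of this lemma at all, stating only that the properties ``are well-known and for the proof we refer the reader to \cite{gro, HG4}.'' Your sketched arguments---completeness via the reproducing identity, the $L^p$ embeddings via Moyal's identity at $p=2$ plus endpoint estimates and interpolation, the nesting via the convolution inequality $|V_g f|\lesssim |V_\gamma f|\ast|V_g\gamma|$ with a Schwartz kernel, and density via inversion---are essentially the standard proofs one finds in Gr\"ochenig's book, so your reconstruction is faithful to the cited source. You even anticipated this outcome in your final sentence.
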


\noindent   
We also refer to Gr\"ochenig's book \cite{gro} for the
basic definitions and further properties of modulation spaces.  Finally, we note that there is also an equivalent definition of modulation spaces using frequency-uniform decomposition techniques (which is quite similar in the spirit of Besov spaces), independently studied by Wang et al. in  \cite{WZG}, which has turned out to be very fruitful in PDE, see  \cite{WB}. 

\section{Hermite multipliers via transference theorems}\label{hmvt}

As  mentioned in the introduction we prove our main results via transference techniques.  We first investigate the connection between $m(H) $ acting on $M^{q,p}(\R^d)$ and $ m(\mathcal L_0) $ acting on $ L^p(G^d) $ where $ G^d = \Hb^d_{pol}/\Gamma_0.$ Recall that $ f \in M^{q,p}(\R^d) $ if and only if $ \langle\rho(x,y)f,\Phi_0\rangle \in L^p(\R^d,L^q(\R^d)).$ Moreover, for any left invariant vector filed $ X $ on $ G^d $ we have
$$ X \langle \rho(x,y,t)f,\Phi_0\rangle = \langle \rho(x,y,t)\rho^*(X)f,\Phi_0\rangle $$
where $ \rho^*(X)\varphi = \frac{d}{dt}|_{t=0}\rho(\text{exp}(tX))\varphi.$ A simple calculation shows that $ \rho^*(X_j) = i\xi_{j}  $ and $ \rho^*(Y_j) = \frac{\partial}{\partial \xi_{j}}$ for $ j =1,2,...,d.$ Consequently, we get $ \mathcal L\langle \rho(x,y,t)f,\Phi_0\rangle   = \langle \rho(x,y,t)Hf,\Phi_0\rangle  $ which leads to the identity 
$$ m(\mathcal {\tilde{L}}_0)\langle \rho(\cdot)f,\Phi_0\rangle   = \langle \rho(x,y,t)m(H)f,\Phi_0\rangle $$ via spectral theorem (See \cite[Section 2.3]{ST2} for details). Thus we see that
\begin{eqnarray}\label{hl}
\| m(\mathcal {\tilde{L}}_0)\langle \rho(\cdot)f,\Phi_0\rangle  \|_{L^p(\R^d, L^q(\R^d \times \T))} = \|m(H)f \|_{M^{q,p}(\R^d)}.
\end{eqnarray}
Consequently, the boundedness of $m(H) $ on $ M^{q,p}(\R^d) $ is implied by the boundedness of $ m(\mathcal {\tilde{L}}_0)$ on $L^p(\R^d, L^q(\R^d \times \T)).$ We can now prove Theorem \ref{MT}.\\

\begin{proof}[Proof of Theorem \ref{MT}] We do this in two steps. Assuming that $ m(\mathcal { \tilde{L}}) $ is bounded on $ L^p(\Hb^d_{pol}) $ it follows from Theorem \ref{NTP} that $m(\mathcal {\tilde{L}}_0) $ is bounded on $ L^p(\Hb^d_{pol}/\Gamma).$  As the underlying manifold of $\Hb^d_{pol}/\Gamma $ is $ \R^d \times \R^d \times \T $ we have the boundedness of $ m(\mathcal {\tilde{L}}_0) $ on the space $ L^p(\R^d, L^p(\R^d \times \T)).$ If we can show that under the assumptions on $ p $ and $ q $ stipulated in Theorem \ref{MT}, the operator $m(\mathcal {\tilde{L}}_0) $ is bounded on the mixed norm  space $ L^p(\R^d, L^q(\R^d \times \T))$ then we are done simply by applying $ m(\mathcal {\tilde{L}}_0) $ to $ \langle \pi(z,t)f,\Phi_0\rangle$ (see \eqref{hl}). To this end, we make use of the following result of Herz and Rivi\'ere \cite{HR}.

\begin{theorem}[Herz-Rivi\`ere \cite{HR}]\label{hr}  Let   $G=\Gamma H$  be the semi-direct product of an amenable group $H$  by a locally compact group $\Gamma.$  Assume that a bounded linear operator $T:L^{p}(G) \to L^{p}(G)$  commutes with  left-translations.  Take $q$ such that $1\leq p\leq q  \leq 2$ or   $2\leq q \leq p < \infty.$ Then for any complex-valued continuous function $f$ of compact support on $G,$ we have 

$$\|Tf\|_{L^{p}(\Gamma, L^{q}(H))} \lesssim \| f\|_{L^{p}(\Gamma, L^{q}(H))}.$$ Consequently, $ T $ has a bounded extension to the mixed norm spaces $L^{p}(\Gamma, L^{q}(H)).$

\end{theorem}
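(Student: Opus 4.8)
The plan is to obtain the mixed-norm bound for all admissible $q$ by interpolating between the two extreme exponents $q=p$ and $q=2$. Using the coordinates of the semidirect product we identify $L^{p}(G)$ isometrically with the iterated space $L^{p}(\Gamma,L^{p}(H))$, so the case $q=p$ is exactly the hypothesis that $T$ is bounded on $L^{p}(G)$. The exponent range in the statement is precisely the interval of $q$ lying between $p$ and $2$: if $p\le 2$ then $p\le q\le 2$, while if $p\ge 2$ then $2\le q\le p$. Consequently, once the endpoint $q=2$ is in hand, complex interpolation of the target spaces with the outer exponent frozen at $p$, applied to the single operator $T$, gives the full range, because $[L^{p}(\Gamma,L^{2}(H)),L^{p}(\Gamma,L^{p}(H))]_{\theta}=L^{p}(\Gamma,L^{q}(H))$ with $1/q=(1-\theta)/2+\theta/p$, and $\theta\in[0,1]$ sweeps out exactly the stated range in both cases.

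The heart of the matter is thus the endpoint $q=2$, namely the bound $\|Tf\|_{L^{p}(\Gamma,L^{2}(H))}\lesssim\|f\|_{L^{p}(\Gamma,L^{2}(H))}$, and this is where the amenability of $H$ enters. The guiding model is the elementary case $p=1$: since the inner norm $\|\cdot\|_{L^{2}(H)}$ is invariant under translations of $H$ and since $T$, commuting with all left translations of $G$, in particular commutes with translations by the subgroup $H$, Minkowski's integral inequality lets one pull the $L^{2}(H)$ norm inside the convolution and reduce to a scalar convolution estimate on $\Gamma$. For general $p\in(1,\infty)$ this direct manoeuvre fails, and one instead transfers the $L^{p}(G)$-bound to the $L^{2}(H)$-fibre by an amenable averaging argument in the spirit of Coifman--Weiss: choosing a F{\o}lner net $\{U_{\alpha}\}$ in $H$ (available precisely because $H$ is amenable) and averaging $f$ against the normalized indicators $|U_{\alpha}|^{-1/2}\mathbf 1_{U_{\alpha}}$ in the $H$-variable produces functions on $G$ whose $L^{p}(G)$-norms approach $\|f\|_{L^{p}(\Gamma,L^{2}(H))}$ along the net, the quadratic (Plancherel) behaviour in the fibre emerging from the almost-invariance of the $U_{\alpha}$. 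Applying $T$, using left-translation invariance to move it past the averaging, and passing to the limit then yields the endpoint estimate with constant at most $\|T\|_{L^{p}(G)\to L^{p}(G)}$.

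Finally, the resulting inequality on the dense class of continuous compactly supported functions extends to all of $L^{p}(\Gamma,L^{q}(H))$ by density, which gives the asserted bounded extension. I expect the main obstacle to be the $q=2$ endpoint: one must handle the distortion of the $H$-fibres by the $\Gamma$-action in the semidirect product (so that the F{\o}lner sets and the averaging are chosen compatibly with the group law), keep track of the modular function relating the left Haar measures of $G$, $\Gamma$ and $H$, and justify interchanging the limit along the F{\o}lner net with the action of $T$. By comparison the interpolation and density steps are routine once this endpoint is secured.
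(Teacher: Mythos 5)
First, a point of comparison: the paper does not prove this statement at all. It is quoted verbatim from Herz--Rivi\`ere \cite{HR} and used as a black box in the proof of Theorem \ref{MT}, so there is no internal proof to measure you against; the relevant benchmark is the original argument in \cite{HR}. Your global architecture --- identify $L^{p}(G)$ with $L^{p}(\Gamma,L^{p}(H))$ so that $q=p$ is the hypothesis, prove the $q=2$ endpoint using amenability of $H$, recover the full range by Calder\'on's complex interpolation $[L^{p}(\Gamma,L^{2}(H)),L^{p}(\Gamma,L^{p}(H))]_{\theta}=L^{p}(\Gamma,L^{q}(H))$ with the outer exponent frozen, and finish by density --- is indeed the architecture of the Herz--Rivi\`ere proof, and your interpolation and density steps are correct as described.

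The genuine gap is in your mechanism for the $q=2$ endpoint. Averaging $f$ in the $H$-variable against $|U_{\alpha}|^{-1/2}\mathbf{1}_{U_{\alpha}}$ is a \emph{linear scalar} operation: on the bulk of a large F{\o}lner set it reproduces a normalization of the fiber mean $\int_{H}f(\gamma,k)\,dk$ (the fiber Fourier transform at the trivial character), not the quadratic quantity $\|f(\gamma,\cdot)\|_{L^{2}(H)}$; the resulting $L^{p}(G)$ norms behave like $|U_{\alpha}|^{1/p-1/2}\bigl\|\int_{H}f(\cdot,k)\,dk\bigr\|_{L^{p}(\Gamma)}$, a strictly weaker functional. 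No amount of almost-invariance makes an $L^{2}$ norm ``emerge'' from a single scalar average --- quadratic behaviour cannot come out of a linear scalar construction. The missing idea is the \emph{Hilbert-space-valued extension} of $T$: by the Marcinkiewicz--Zygmund/Gaussian-randomization theorem, a bounded operator on $L^{p}(G)$ extends to $T\otimes I$ on $L^{p}(G;\mathcal{H})$ with the same norm for any Hilbert space $\mathcal{H}$. One then lifts $f$ to the $L^{2}$-valued function $F_{U}(\gamma,h)(x)=|U|^{-1/2}\mathbf{1}_{U}(x)\,\bigl(L_{(e,x)}f\bigr)(\gamma,h)$, built from \emph{left} translations $L_{(e,x)}$ (which act on the fibers by the $\gamma$-twisted shifts $h\mapsto h-\gamma x$, and with which $T$ commutes, so that $(T\otimes I)F_{U}$ is exactly the corresponding lift of $Tf$). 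The F{\o}lner property --- with $U$ chosen almost invariant under the compact set $\bigcup_{\gamma}\gamma^{-1}K$, where $K$ is the fiber support and $\gamma$ ranges over the compact $\Gamma$-support of $f$, so that the twisting and the modular factors cancel upon normalizing the averages by $|\gamma U|$ --- then yields $\|F_{U}\|_{L^{p}(G;L^{2})}\to c\,\|f\|_{L^{p}(\Gamma,L^{2}(H))}$, with the matching lower bound for the $Tf$ side obtained by a routine truncation/Fatou argument. With this ingredient restored your outline closes (and, reassuringly, in the paper's application $\Gamma=\R^{d}$ acts on $\R^{d}\times\T$ measure-preservingly and all groups are unimodular, so the modular bookkeeping you worried about trivializes); without it, the endpoint --- and hence the theorem --- is not proved.
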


In view of the above theorem all we have to do is to realise $ G^d $ as a semidirect product of $ \R^d $ with $ \R^d \times \T.$  As $ m(\mathcal L_0) $ is invariant under left translations, we can then apply the above theorem to arrive at the required conclusion.

Let us briefly recall the definition of a semidirect product for the convenience of the readers.
Let  $H$ be a topological group whose operation  is written as  $+$ and  $\Gamma$  another  topological group,  written as  multiplicatively, such that there is a continuous map $\Gamma \times H \to H $: $(\sigma, x)\mapsto \sigma x$ with $\sigma( x+y) = \sigma x + \sigma y$ and $\tau (\sigma x)= (\tau \sigma) x.$  In this situation, we say that $\Gamma$ acts on $H.$
The semi-direct product $\Gamma H$ is then the   topological space $\Gamma \times H$  with the group operation
$$(\sigma, x)\cdot (\tau, y) = (\sigma \tau, \tau x +y).$$
Let $\Gamma = (\mathbb{R}^d,+)$ be the additive group and $H = (\mathbb{R}^d\times \mathbb T,  \cdot)$ be the  group with following group law:
$$(y,e^{it})\cdot (y',e^{it'}) = (y+y',e^{i(t+t')}).$$
We define a map $\Gamma \times H \to H$ as 
$(x, (y,  e^{it}) ) \mapsto (y, e^{i (t+ xy)}).$ We note  that via this map, $\Gamma$ acts on $H.$
If there is no confusion, we write $(x, (y, e^{it}))= (x, y, e^{it})$ for $(x, (y, e^{it})) \in \Gamma \times H.$
 Forming the   semi-direct product $G=\Gamma H$ we see that the group law is given by $$(x,y,e^{it})(x',y',e^{it'}) = (x+x',y+y',e^{i(t+t'+x'.y)}).$$ This is precisely the group $ G^d = \Hb^d_{pol}/\Gamma$. Hence, by Theorem \ref{hr}  we get the  boundedness  of  $m(\mathcal L_0) $  on the mixed norm  space $ L^p(\R^d, L^q(\R^d \times \T))$. This completes the proof of Theorem \ref{MT}.
 \end{proof}

\begin{proof}[Proof of Theorem \ref{FT}]
Using Theorems \ref{MSH} and \ref{NTP}, we conclude that $m(\mathcal{ \tilde{L}}_0)$ is bounded on $L^{p}(\mathbb H^d_{pol}).$  We now apply Theorem \ref{MT}, to complete the proof.
\end{proof}

\section{Hermite multipliers via Fourier multipliers on torus}\label{hmft}
In the last section, we have proved  the boundedness of Hermite multiplier on modulation spaces via transference results. Specifically, we have proved that the  boundedness of multiplier operators  on  Heisenberg groups ensures the boundeness of  corresponding Hermite multiplier operators on modulation spaces.

In this section, we  first we prove the transference result which connects  Hermite multipliers on modulation spaces and Fourier multipliers on  $L^p(\mathbb T^d).$  Specifically,  our result (Proposition \ref{tpt})  states that  boundedness of Fourier multipliers on torus  guarantees  the boundedness of Hermite multipliers on modulation spaces.   In order to find a  fruitful application of our result, we need to know the boundedness of Fourier multiplier on $L^{p}(\mathbb T^d)$.  To this end, we  prove(Proposition \ref{lp}) boundedness of Fourier multiplier on $L^{p}(\mathbb R^d)-$and then use the celebrated theorem of  de Leeuw to come back to the Fourier multiplier on  $L^{p}(\mathbb T^d).$  Combining these results,  finally in this  section we prove  Theorem \ref{HMT}.
\noindent
For  the sake of convenience of the reader, we recall definitions:
\begin{definition}[Hermite multipliers on $M^{p,q}(\mathbb R^d)$]\label{HM} Let $m$ be a bounded function on $\mathbb N^d.$ We say that $m$ is a Hermite multiplier on the space $M^{p,q}(\mathbb R^d)$ if the linear operator defined by
$$T_{m}f =\sum_{\alpha \in \mathbb N^d} m(\alpha) \langle f, \Phi_{\alpha}  \rangle  \Phi_{\alpha},   \ (f\in \mathcal{S}(\mathbb R^d))$$
extends to a bounded linear operator from $M^{p,q}(\mathbb R^d)$ into itself, that is, $\|T_{m}f\|_{M^{p,q}} \lesssim \|f\|_{M^{p,q}}.$
\end{definition}
\begin{definition}[Fourier multiplier on $L^{p}(\mathbb T^d)$]  Let $m$ be a bounded measurable function  defined on  $\mathbb Z^d.$ We say that  $m$ is a Fourier multiplier on $L^{p}(\mathbb T^d)$ if the linear operator $T_m$ defined by
$$\widehat{(T_mf)}(\alpha)=m(\alpha)\hat{f}(\alpha), \   (f\in \mathcal{P}(\mathbb T^d), \alpha \in \mathbb Z^d)$$
where  $\hat{f}(\alpha)= \int_{\mathbb T^d} f(\theta) e^{- i \theta \cdot \alpha} d\theta$  are the Fourier coefficients of $ f$, extends to a bounded linear operator from $L^{p}(\mathbb T^d)$ into itself, that is, $\|T_{m}f\|_{L^{p}(\mathbb T^d)} \lesssim \|f\|_{L^{p}(\mathbb T^d)}.$
  \end{definition}

Now we prove a  transference result for Fourier multiplier on torus and  Hermite multiplier on modulation spaces$-$which is of interest in itself.  Specifically, we  have the following proposition.

\begin{proposition}\label{tpt} Let $1\leq p < \infty.$
If $m:\mathbb Z^d \to \mathbb C $ is a Fourier multiplier on $L^p(\mathbb{T}^d)$, then $m|_{\mathbb N^d}$,  the restriction of $m$ to $\mathbb N^d,$  is  a Hermite multiplier on $M^{p,p}(\mathbb R^d)$.
\end{proposition}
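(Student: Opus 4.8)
The plan is to compute the $M^{p,p}$ norm directly through the Fourier--Wigner transform with the Gaussian window $\Phi_0$, and then, after passing to polar coordinates in $\mathbb{C}^d$, to recognize the Hermite multiplier as an ordinary Fourier multiplier on $\mathbb{T}^d$ acting separately on each sphere of fixed radius. By Remark \ref{equidm} the modulation norm is independent of the window, and by the identity \eqref{ui} one has $\|f\|_{M^{p,p}} = c_d\,\|W_{\Phi_0}f\|_{L^p(\mathbb{R}^{2d})}$ for a fixed constant $c_d$ (the relation $(x,y)\mapsto(y,-x)$ being measure preserving and unimodular). So it suffices to control $W_{\Phi_0}$. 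By density of $\mathcal{S}(\mathbb{R}^d)$, hence of finite Hermite sums, in $M^{p,p}$ (Lemma \ref{pl}\eqref{d}), I would first take $f=\sum_\alpha c_\alpha\Phi_\alpha$ to be a finite linear combination of Hermite functions; then $T_m f=\sum_\alpha m(\alpha)c_\alpha\Phi_\alpha$ is again such a combination and every manipulation below is carried out on polynomials times a Gaussian.

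The next step uses $W_{\Phi_0}\Phi_\alpha=\Phi_{\alpha,0}$ (immediate from \eqref{SHF}) together with the explicit formula \eqref{EPSHF} to write
$$W_{\Phi_0}f(z)=e^{-\frac14|z|^2}\sum_\alpha a_\alpha\,\bar z^{\alpha},\qquad a_\alpha=(2\pi)^{-d/2}(\alpha!)^{-1/2}\Big(\tfrac{i}{\sqrt2}\Big)^{|\alpha|}c_\alpha.$$
The structural point on which everything hinges is that this factors into a radial Gaussian $e^{-|z|^2/4}$ and an angular part assembled from the monomials $\bar z^{\alpha}$. Writing $z_j=r_je^{i\theta_j}$ gives $\bar z^{\alpha}=r^{\alpha}e^{-i\alpha\cdot\theta}$ with $r^{\alpha}=\prod_j r_j^{\alpha_j}$, while $|z|^2=\sum_j r_j^2$ depends only on $r$. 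Setting $F_r(\theta)=\sum_\alpha a_\alpha r^{\alpha}e^{-i\alpha\cdot\theta}$ and using $dz=\prod_j r_j\,dr_j\,d\theta_j$, the norm becomes
$$\|f\|_{M^{p,p}}^p=C\int_{(0,\infty)^d}\|F_r\|_{L^p(\mathbb{T}^d)}^p\,e^{-\frac{p}{4}\sum_j r_j^2}\prod_j r_j\,dr_j .$$

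The heart of the argument is that $W_{\Phi_0}(T_m f)$ corresponds, on the slice of radius $r$, to $F_r^{m}(\theta):=\sum_\alpha m(\alpha)a_\alpha r^{\alpha}e^{-i\alpha\cdot\theta}$, whose Fourier support lies in $-\mathbb{N}^d$ and whose coefficient at the frequency $-\alpha$ is $m(\alpha)$ times that of $F_r$. Since the reflection $\theta\mapsto-\theta$ is an isometry of $L^p(\mathbb{T}^d)$, the symbol $\mu\mapsto m(-\mu)$ is a Fourier multiplier on $L^p(\mathbb{T}^d)$ of the same operator norm as $m$; applying the multiplier operator with this reflected symbol to $F_r$ produces exactly $F_r^{m}$. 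The hypothesis then yields $\|F_r^{m}\|_{L^p(\mathbb{T}^d)}\le \|m\|_{\mathrm{mult}}\,\|F_r\|_{L^p(\mathbb{T}^d)}$ with a constant uniform in $r$. Substituting this into the displayed integral and comparing with the identical formula for $\|f\|_{M^{p,p}}$ gives $\|T_m f\|_{M^{p,p}}\le \|m\|_{\mathrm{mult}}\,\|f\|_{M^{p,p}}$, after which the density argument finishes the proof.

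The conceptual key is the observation that the Gaussian window turns $W_{\Phi_0}\Phi_\alpha$ into a monomial times a purely radial Gaussian, decoupling the radius from the angle and converting the Hermite multiplier into a torus Fourier multiplier. The step I expect to require the most care is the slice-by-slice reduction itself: the Fubini interchange of the $r$- and $\theta$-integrations and the uniformity in $r$ of the torus multiplier bound. Both are transparent once one restricts to finite Hermite sums, which is precisely why I would do the entire computation there and only pass to a general $f\in M^{p,p}$ by density at the very end; a secondary, purely bookkeeping subtlety is the reflection needed to convert the $m(-\alpha)$ that arises naturally into the $m(\alpha)$ demanded by Definition \ref{HM}.
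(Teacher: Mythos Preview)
Your proposal is correct and follows essentially the same route as the paper's proof: compute the $M^{p,p}$ norm via $W_{\Phi_0}$, use the explicit formula \eqref{EPSHF} for $\Phi_{\alpha,0}$, pass to polar coordinates $z_j=r_je^{i\theta_j}$, and apply the torus multiplier bound on each radial slice. Your handling of two minor points is in fact slightly cleaner than the paper's: you work with finite Hermite sums (so all series are trivially finite and Fubini is immediate), whereas the paper uses general Schwartz $f$ and appeals to $\sum_\alpha|a_\alpha|<\infty$; and you explicitly address the sign/reflection issue arising from the Fourier support of $F_r$ lying in $-\mathbb{N}^d$, which the paper passes over silently.
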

\begin{proof} Since $\mathcal{S}(\mathbb R^d)$ is dense (Lemma \ref{pl}\eqref{d}) in $M^{p,p}(\mathbb R^d),$ it would be sufficient to prove
\begin{eqnarray}\label{tmc}
\|T_mf\|_{M^{p,p}} \lesssim \|f\|_{M^{p,p}}
\end{eqnarray}
for $f\in \mathcal{S}(\mathbb R^d).$ In fact,  by density argument, it follows that $T_m$ has  a bounded extension  to   $M^{p,p}(\mathbb R^d),$ that is,  inequality \eqref{tmc} holds true for $f\in M^{p,p}(\mathbb R^d).$ 

Let $f\in \mathcal{S}(\mathbb R^d).$ Then using special Hermite functions \eqref{SHF} and their property \eqref{EPSHF}, we  compute the modulation space norm (See Section \ref{MW} ) of Hermite multiplier operator (See Definition \ref{HM}), and we obtain
\begin{eqnarray}\label{is}
\|T_mf\|^p_{M^{p,p}} & =  & (2\pi)^{-dp/2} \int_{\mathbb R^{2d} } | \langle \pi(x,y)T_mf,\Phi_0\rangle |^p dy  \ dx \nonumber \\
& = & (2\pi)^{-dp/2}  \int_{\mathbb R^{2d}} \left|\sum_ {\alpha \in \mathbb N^d}m(\alpha)\langle f,\Phi_\alpha\rangle\langle \pi(x,y)\Phi_\alpha,\Phi_0\rangle\right|^p dy \ dx \nonumber \\
& = &  \int_{\mathbb C^d} \left|\sum_{\alpha \in \mathbb N^d } m(\alpha)\langle f,\Phi_\alpha\rangle\Phi_{\alpha,0}(z)\right|^p dz \nonumber \\
& = & (2\pi)^{-dp/2}\int_{\mathbb C^d} \left|\sum_{\alpha \in \mathbb N^d } m(\alpha)\langle f,\Phi_\alpha\rangle\frac{i^{|\alpha|} \bar{z}^\alpha}{\sqrt{\alpha !}\,2^{|\alpha|/2}} e^{-\frac{1}{4}|z|^2}\right|^p dz.
\end{eqnarray}
By using polar coordinates  $z_j= r_j e^{i \theta_j}$,  $r_j:=|z_j|\in [0, \infty), z_j\in \mathbb C$ and  $\theta_j \in [0, 2\pi),$ we get
\begin{eqnarray}\label{LPC}
z^{\alpha}= r^{\alpha} e^{i \alpha \cdot \theta} \ \  \text{and} \ \  dz= r_1 r_2 \cdots r_d d\theta dr
\end{eqnarray}
 where $r = (r_1,\cdots, r_d)$, $\theta = (\theta_1,\cdots,\theta_d)$, $dr = dr_1\cdots dr_d$, $d\theta = d\theta_1\cdots d\theta_d, |r|=\sqrt{\sum_{j=1}^d r_j^2}.$ \\ 
By writing the integral over $\mathbb C^d= \mathbb R^{2d}$ in polar coordinates in
each time-frequency pair and using  \eqref{LPC}, we have
\begin{eqnarray}\label{apc}
\int_{\mathbb C^d} \left|\sum_{\alpha\in \mathbb N^d} m(\alpha)\langle f,\Phi_\alpha\rangle\frac{i^{|\alpha|} \bar{z}^\alpha}{\sqrt{\alpha !}\,2^{|\alpha|/2}} e^{-\frac{1}{4}|z|^2}\right|^p dz\\  = 
\prod_{j=1}^d  \int_{\mathbb{R}^+}\int_{[0,2\pi]}\left|\sum_{\alpha \in \mathbb N^d } m(\alpha)\left(\langle f,\Phi_{\alpha}\rangle\frac{i^{|\alpha|} r^{\alpha}}{\sqrt{\alpha !}\,2^{|\alpha |/2}} e^{-\frac{1}{4}|r|^2}\right)e^{-i\alpha\cdot \theta}\right|^p\,r_j\,d\theta_j\,dr_j. \nonumber
\end{eqnarray}

We put  $a_\alpha= \langle f,\Phi_\alpha\rangle\frac{i^{|\alpha|} r^\alpha}{\sqrt{\alpha !}\,2^{|\alpha|/2}} e^{-\frac{1}{4}|r|^2}.$  Since $|\langle f, \Phi_{\alpha} \rangle | \leq \|f\|_{L^2} \|\Phi_{\alpha} \|_{L^2} \leq \|f\|_2$ ,   the series  $\sum_{\alpha \in \mathbb N^d} |a_\alpha| $ converges. Thus there exists a continuous function $g\in L^{p}(\mathbb T^d)$ with  Fourier coefficients $\hat{g}(\alpha)= a_\alpha$ for $\alpha \in \mathbb N^d$ and $\hat{g}(\alpha)=0$ for $\alpha \in \mathbb Z^{d}\setminus \mathbb N^d.$  In fact, the Fourier series of this $g$ is absolutely convergent, and therefore we may  write 
\begin{eqnarray}\label{FR}
g(e^{i\theta}) = \sum_{\alpha \in \mathbb Z^d} a_{\alpha} e^{-i \alpha \cdot \theta}.
\end{eqnarray}
 Since $m$ is a Fourier  multiplier on $L^{p}(\mathbb T^d),$  \eqref{FR} gives
\begin{eqnarray}\label{fs}
\int_{[0, 2\pi]^d} \left|\sum_{\alpha \in \mathbb N^d} m(\alpha) a_{\alpha} e^{i\alpha \cdot \theta}\right |^p d\theta \lesssim \int_{[0, 2\pi]^d} \left|\sum_{\alpha\in \mathbb N^d} a_{\alpha} e^{-i \alpha \cdot \theta}\right|^p d\theta.
\end{eqnarray}
Now taking \eqref{is}, \eqref{apc}, and \eqref{fs} into  account, we obtain
\begin{align*}
\|T_mf\|_{M^{p,p}} &\lesssim (2\pi)^{-d/2} \left(  \prod_{j=1}^d  \int_{\mathbb{R}^+}\int_{[0,2\pi]}\left|\sum_{\alpha \in \mathbb N^d}\left(\langle f,\Phi_{\alpha}\rangle\frac{i^{|\alpha |} r^{\alpha}}{\sqrt{\alpha !}\,2^{|\alpha|/2}} e^{-\frac{1}{4}|r|^2}\right)e^{-i\alpha \cdot  \theta}\right|^p\,r_j\,d\theta_j\,dr_j\right)^{\frac{1}{p}}\\
& = (2\pi)^{-d/2} \left(\int_{\mathbb C^d} \left|\sum_{\alpha \in \mathbb N^d } \langle f,\Phi_\alpha\rangle\frac{i^{|\alpha|} \bar{z}^\alpha}{\sqrt{\alpha !}\,2^{|\alpha|/2}} e^{-\frac{1}{4}|z|^2}\right|^p dz\right)^{\frac{1}{p}} \\
& =  \left(\int_{\mathbb C^d} \left|\sum_{\alpha \in \mathbb N^d } \langle f,\Phi_\alpha\rangle\Phi_{\alpha,0}(z)\right|^p dz \right)^{\frac{1}{p}} \\
& =  (2\pi)^{-d/2} \left(\int_{\mathbb R^{2d}} \left|\sum_ {\alpha \in \mathbb N^d}\langle f,\Phi_\alpha\rangle\langle \pi(x,y)\Phi_\alpha,\Phi_0\rangle\right|^p dy \ dx\right)^{\frac{1}{p}}\\
&=  \|f\|_{M^{p,p}}.
\end{align*} 
Thus, we conclude that $\|T_mf\|_{M^{p,p}} \lesssim \|f\|_{M^{p,p}}$ for $f\in \mathcal{S}(\mathbb R^d).$ 
This completes the proof of Proposition \ref{tpt}. 
\end{proof}
Next we recall the celebrated  theorem of de Leeuw, which  gives the relation between  Fourier multipliers on Euclidean  spaces and tori.  To do this, we start with
\begin{definition}[Fourier multiplier on $L^{p}(\mathbb R^d)$]\label{fme} Let $m$ be a bounded measurable function  defined on  $\mathbb R^d.$ We say that  $m$ is a Fourier multiplier on $L^{p}(\mathbb R^d)$ if the linear operator $T_m$ defined by
$$\widehat{(T_mf)}(\xi)=m(\xi)\hat{f}(\xi), \   (f\in \mathcal{S}(\mathbb R^d), \ \xi\in \mathbb R^d)$$
where $\hat{f}$ is the Fourier transform, extends to a bounded linear operator from $L^{p}(\mathbb R^d)$ into itself, that is, $\|T_{m}f\|_{L^{p}} \lesssim \|f\|_{L^{p}}.$
  \end{definition}
\begin{theorem}[de Leeuw]\label{dl} If  $m:\mathbb R^d\to \mathbb C$ is continuous and  Fourier multiplier on $ L^{p}(\mathbb R^d)  (1\leq p < \infty)$,  then  $m|_{\mathbb Z^d},$ the restriction of $m$ to $\mathbb Z^d$,  is a Fourier multiplier on $L^{p}(\mathbb T^d).$
\end{theorem}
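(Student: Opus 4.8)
The plan is to prove this restriction (``periodization'') direction of de Leeuw's theorem by a modulated dilation argument. By density of $\mathcal{P}(\T^d)$ in $L^p(\T^d)$ it suffices to bound $\|T_{m|_{\Z^d}}P\|_{L^p(\T^d)}\le C_p\,\|P\|_{L^p(\T^d)}$ for every trigonometric polynomial $P(x)=\sum_{\mu\in F}c_\mu e^{i\mu\cdot x}$ with $F\subset\Z^d$ finite, where $C_p:=\|T_m\|_{L^p(\R^d)\to L^p(\R^d)}$ is the multiplier norm of $m$ (Definition \ref{fme}). I would fix a Schwartz bump $\phi\in\mathcal{S}(\R^d)$ (a Gaussian is convenient, so that $\widehat{\phi}\in L^1\cap L^2$ and $\phi(0)\neq 0$) and, for $\epsilon>0$, form the modulated dilates $f_\epsilon(x)=P(x)\,\phi(\epsilon x)\in\mathcal{S}(\R^d)$.

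First I would record two auxiliary facts. The \emph{norm asymptotics} (an equidistribution lemma): for any trigonometric polynomial $Q$ on $\T^d$, since $|Q(\cdot/\epsilon)|^p$ averages to its mean as $\epsilon\to 0$ and $|\phi|^p\in L^1(\R^d)$, the substitution $y=\epsilon x$ gives $\lim_{\epsilon\to0}\epsilon^{d}\int_{\R^d}|Q(x)|^p|\phi(\epsilon x)|^p\,dx=(2\pi)^{-d}\|Q\|_{L^p(\T^d)}^p\,\|\phi\|_{L^p(\R^d)}^p$, hence $\epsilon^{d/p}\|Q(\cdot)\phi(\epsilon\cdot)\|_{L^p(\R^d)}\to (2\pi)^{-d/p}\|\phi\|_{L^p}\,\|Q\|_{L^p(\T^d)}$. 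The \emph{translation invariance of the multiplier norm}: for $\eta\in\R^d$ the symbol $m(\cdot-\eta)$ has the same multiplier norm as $m$, because $T_{m(\cdot-\eta)}=M_\eta T_m M_{-\eta}$ with $M_\eta f(x)=e^{i\eta\cdot x}f(x)$ an isometry of $L^p(\R^d)$. I would also use that an $L^p$ multiplier is automatically bounded, with $\|m\|_\infty\le C_p$ (by duality and interpolation to $p=2$).

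Next I would compute $T_m f_\epsilon$. Since $\widehat{f_\epsilon}(\xi)=\sum_{\mu\in F}c_\mu\,\epsilon^{-d}\widehat{\phi}\big((\xi-\mu)/\epsilon\big)$, the substitution $\xi=\mu+\epsilon\eta$ gives $T_m f_\epsilon(x)=\sum_{\mu\in F}c_\mu e^{i\mu\cdot x}\,\Psi_\mu^\epsilon(x)$, where $\Psi_\mu^\epsilon=T_{m(\mu+\cdot)}\big[\phi(\epsilon\cdot)\big]$. Writing $m(\mu+\cdot)=m(\mu)+n_\mu$ with $n_\mu(\xi)=m(\mu+\xi)-m(\mu)$ continuous and vanishing at $0$, the crucial observation is that the rescaled error $h_\mu^\epsilon(y):=\big(\Psi_\mu^\epsilon-m(\mu)\phi(\epsilon\cdot)\big)(y/\epsilon)$ is exactly the inverse Fourier transform of $\eta\mapsto n_\mu(\epsilon\eta)\widehat{\phi}(\eta)$, and a change of variables gives $\epsilon^{d/p}\|\Psi_\mu^\epsilon-m(\mu)\phi(\epsilon\cdot)\|_{L^p(\R^d)}=\|h_\mu^\epsilon\|_{L^p(\R^d)}$. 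Now $n_\mu(\epsilon\eta)\widehat{\phi}(\eta)\to 0$ pointwise and is dominated by $2\|m\|_\infty|\widehat{\phi}|$, so by dominated convergence this symbol tends to $0$ in every $L^q$; Hausdorff--Young for $1\le p\le 2$, and interpolation between the $L^2$ and $L^\infty$ estimates for $2\le p<\infty$, then yield $\|h_\mu^\epsilon\|_{L^p}\to 0$. As $F$ is finite, $\epsilon^{d/p}\|T_m f_\epsilon-(T_{m|_{\Z^d}}P)(\cdot)\phi(\epsilon\cdot)\|_{L^p(\R^d)}\to 0$.

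Finally I would assemble the estimate: applying the norm asymptotics to $Q=P$ and to the trigonometric polynomial $Q=T_{m|_{\Z^d}}P$, together with the error bound just obtained, gives $\epsilon^{d/p}\|f_\epsilon\|_{L^p}\to (2\pi)^{-d/p}\|\phi\|_{L^p}\|P\|_{L^p(\T^d)}$ and $\epsilon^{d/p}\|T_m f_\epsilon\|_{L^p}\to (2\pi)^{-d/p}\|\phi\|_{L^p}\|T_{m|_{\Z^d}}P\|_{L^p(\T^d)}$. Using $\|T_m f_\epsilon\|_{L^p}\le C_p\|f_\epsilon\|_{L^p}$ for each $\epsilon$ and letting $\epsilon\to 0$, the common factor $(2\pi)^{-d/p}\|\phi\|_{L^p}$ cancels, leaving $\|T_{m|_{\Z^d}}P\|_{L^p(\T^d)}\le C_p\|P\|_{L^p(\T^d)}$. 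The main obstacle is exactly the error analysis of the third step: because $m$ is only assumed continuous one cannot integrate by parts to obtain $L^p$ decay of $\Psi_\mu^\epsilon-m(\mu)\phi(\epsilon\cdot)$ directly, and mere uniform convergence is useless on the infinite-measure space $\R^d$; the resolution is to combine translation invariance of the multiplier norm with the Fourier-analytic (Hausdorff--Young and interpolation) estimates above, which is precisely where the continuity hypothesis on $m$ is genuinely used.
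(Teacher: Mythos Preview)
The paper does not prove this theorem; it is merely recalled as a classical result of de Leeuw and used as a black box. So there is no ``paper's proof'' to compare with, and I evaluate your argument on its own merits.

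Your overall strategy --- testing against $f_\epsilon=P\,\phi(\epsilon\cdot)$, using the equidistribution lemma for the norm asymptotics, and passing to the limit --- is exactly the standard route to de Leeuw's restriction theorem, and the computations of $\widehat{f_\epsilon}$, $\Psi_\mu^\epsilon$, and the rescaled error $h_\mu^\epsilon=\mathcal{F}^{-1}\big(n_\mu(\epsilon\cdot)\widehat{\phi}\big)$ are all correct.

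There is, however, a genuine gap in the error step for $1\le p<2$. You claim that ``Hausdorff--Young for $1\le p\le 2$'' yields $\|h_\mu^\epsilon\|_{L^p}\to 0$, but Hausdorff--Young gives $\|\mathcal{F}^{-1}g\|_{L^{q'}}\lesssim\|g\|_{L^q}$ only for $1\le q\le 2$, i.e.\ it controls $\|h_\mu^\epsilon\|_{L^r}$ for $r\ge 2$, not for $r<2$. Your other tool, interpolation between the $L^2$ and $L^\infty$ bounds, likewise covers only $r\ge 2$. So both cases you list actually handle the same range $p\ge 2$, and the range $1\le p<2$ is left open. Knowing merely that $n_\mu(\epsilon\cdot)\widehat{\phi}\to 0$ in every $L^q$ does not force its inverse Fourier transform to be small in $L^p$ for $p<2$; indeed, there is no Hausdorff--Young inequality in that direction.

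Two standard fixes: (i) reduce to $p\ge 2$ by duality --- since $m$ is an $L^p(\R^d)$ multiplier it is also an $L^{p'}(\R^d)$ multiplier with the same norm, so run your argument at exponent $p'\ge 2$ and then use that a Fourier multiplier on $L^{p'}(\T^d)$ is automatically one on $L^p(\T^d)$; or (ii) replace the direct $L^p$ error estimate by the bilinear pairing argument: compute $\epsilon^{d}\langle T_m(P\phi(\epsilon\cdot)),\,Q\phi(\epsilon\cdot)\rangle_{\R^d}$ for a second trigonometric polynomial $Q$, show it converges (by dominated convergence on the Fourier side, using only continuity and boundedness of $m$) to a constant times $\langle T_{m|_{\Z^d}}P,Q\rangle_{\T^d}$, bound it by $C_p\,\epsilon^{d/p}\|P\phi(\epsilon\cdot)\|_{L^p}\,\epsilon^{d/p'}\|Q\phi(\epsilon\cdot)\|_{L^{p'}}$, and apply your norm asymptotics in both exponents. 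Either route closes the gap with no further hypotheses on $m$.
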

\noindent 
In order prove Theorem \ref{HMT}, we also need the boundedness of  a following Fourier multiplier 
\begin{eqnarray}\label{ncm}
m(\xi)= \frac{e^{i (2|\xi|_1+d)^\gamma}}{(2|\xi|_1+d)^\beta} \ \ \  (\beta> 0, \gamma>0, \xi \in \mathbb R^d)
\end{eqnarray}
where $|\xi|_1 := \sum_{j=1}^d |\xi_j|,$ on $L^{p}(\mathbb R^d).$  This we shall prove in the next proposition.   We note that in  \cite[Theorem 1]{tps}, it is  proved that   the function $m(\xi)$ which  is 0 near the origin and  $|\xi|^{-\beta} e^{i|\xi|^{\alpha}}  \   \   (|\xi| = \sqrt{ \sum_{j=1}^d} \xi_j^2 )$   outside a compact set  is  a Fourier multiplier on  $L^{p}(\mathbb R^d)$ for a suitable choice of $\alpha, \beta, p, d$. Our proof of Proposition \ref{lp} is motivated by this result.

\begin{proposition}\label{lp} Let $m$ be  given by \eqref{ncm}.  Then $m$ is a Fourier multiplier on $L^{p}(\mathbb R^d)$ for $|\frac{1}{p}-\frac{1}{2}|<\frac{\beta}{d\gamma}$.
\end{proposition}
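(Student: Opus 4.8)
The plan is to prove that the multiplier
$$m(\xi)= \frac{e^{i (2|\xi|_1+d)^\gamma}}{(2|\xi|_1+d)^\beta}$$
is bounded on $L^p(\mathbb R^d)$ for $|\tfrac1p-\tfrac12|<\tfrac{\beta}{d\gamma}$ by a direct estimate on its inverse Fourier transform (the convolution kernel), followed by interpolation with the trivial $L^2$ bound. First I would observe that since $m$ is bounded, $T_m$ is automatically bounded on $L^2(\mathbb R^d)$; so by the Riesz--Thorin interpolation theorem and duality (the multiplier is essentially symmetric under $p\mapsto p'$), it suffices to establish a bound on $L^1$ up to the endpoint, i.e. to control the $L^1$ norm of the kernel $K = \mathcal F^{-1}m$ well enough to run the interpolation and reach every $p$ with $|\tfrac1p-\tfrac12|<\tfrac{\beta}{d\gamma}$.

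The key technical step is to estimate the kernel. Because $|\xi|_1 = \sum_j|\xi_j|$ is a sum of one-dimensional pieces and $e^{i(2|\xi|_1+d)^\gamma}(2|\xi|_1+d)^{-\beta}$ is radial only in the $\ell^1$ sense, I would first reduce the oscillatory integral
$$K(x) = (2\pi)^{-d/2}\int_{\mathbb R^d} \frac{e^{i (2|\xi|_1+d)^\gamma}}{(2|\xi|_1+d)^\beta}\, e^{ix\cdot\xi}\,d\xi$$
to an estimate governed by the phase $\phi(\xi)=(2|\xi|_1+d)^\gamma + x\cdot\xi$. The main obstacle is the stationary-phase/non-stationary-phase analysis: one must track how the decay of the kernel in $|x|$ depends on $\beta$ and $\gamma$. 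The governing heuristic is that the amplitude decays like $|\xi|^{-\beta}$ while the oscillation $e^{i|\xi|^\gamma}$ contributes a gain; following the approach of Miyachi--Peral type results \cite{tps}, the critical exponent $\tfrac{\beta}{d\gamma}$ arises precisely because near a stationary point the Hessian of $\phi$ scales like $|\xi|^{\gamma-2}$ in each of the $d$ variables, producing a $|\xi|^{-d(\gamma-2)/2}$ factor from stationary phase that must be balanced against the amplitude decay $|\xi|^{-\beta}$ over the region $|\xi|\sim |x|^{1/(\gamma-1)}$. I would carry out a Littlewood--Paley decomposition $m=\sum_k m_k$ with $m_k$ supported where $2|\xi|_1+d\sim 2^k$, estimate each piece's kernel $K_k$ in $L^1$ separately, and sum.

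For each dyadic piece I would split the analysis by the size of $|x|$: for $|x|$ small relative to $2^{k(\gamma-1)}$ the phase is non-stationary and repeated integration by parts gives rapid decay, while for $|x|\sim 2^{k(\gamma-1)}$ stationary phase applies. Summing the resulting bounds on $\|K_k\|_{L^1}$ over $k$ converges exactly when $\beta$ is large enough relative to $d\gamma$, yielding a weak-type or restricted bound at the endpoint and, after interpolating against the $L^2$ estimate, the full range $|\tfrac1p-\tfrac12|<\tfrac{\beta}{d\gamma}$. The cleanest route may be to invoke the one-variable Miyachi--Peral estimate on each factor, exploiting the tensor-like structure of $|\xi|_1$, though the coupling through the single power $(2|\xi|_1+d)^{-\beta}$ rather than a product prevents a pure tensorization and forces the multidimensional stationary-phase computation above. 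I expect the delicate bookkeeping of the stationary-phase constants across dyadic scales, and verifying that the summation threshold matches $\tfrac{\beta}{d\gamma}$ exactly, to be the main obstacle.
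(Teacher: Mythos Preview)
Your route is genuinely different from the paper's, and as written it has a gap you should be aware of. The paper does not do any stationary-phase or Littlewood--Paley analysis at all. Instead it uses the subordination identity
\[
(2|\xi|_1+d)^{-\beta}=\frac{1}{\Gamma(\beta/\gamma)}\int_0^\infty \sigma^{\frac{\beta}{\gamma}-1}e^{-\sigma(2|\xi|_1+d)^\gamma}\,d\sigma,
\]
which lets one write $T_m f=\frac{1}{\Gamma(\beta/\gamma)}\int_0^\infty \sigma^{\beta/\gamma-1}(k_\sigma*f)\,d\sigma$ with $\widehat{k_\sigma}(\xi)=e^{(i-\sigma)(2|\xi|_1+d)^\gamma}$. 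The crucial point is that $k_\sigma$ has a genuine Gaussian-type decay in $\xi$, so one can bound $\||x|^l k_\sigma\|_{L^2}$ by Plancherel and pointwise derivative estimates on $\widehat{k_\sigma}$, then pass to $\|k_\sigma\|_{L^1}\lesssim \sigma^{-d/2}e^{-\sigma d^\gamma/2}$ via the elementary inequality $\|h\|_{L^1}\lesssim R^{d/2}(\|h\|_{L^2}+R^{-l}\||x|^l h\|_{L^2})$. After Riesz--Thorin between this $L^1$ bound and the trivial $L^2$ bound, the $\sigma$-integral converges exactly when $\frac1p-\frac12<\frac{\beta}{d\gamma}$. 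This is precisely Schonbek's method in the reference you cite as \cite{tps}; it is \emph{not} a Miyachi--Peral stationary-phase argument.

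The specific problem with your sketch is the Hessian claim. In any open orthant, $|\xi|_1$ is linear, so for $\phi(\xi)=(2|\xi|_1+d)^\gamma$ one has
\[
\partial_i\partial_j\phi(\xi)=4\gamma(\gamma-1)(2|\xi|_1+d)^{\gamma-2}\,\operatorname{sgn}(\xi_i)\operatorname{sgn}(\xi_j),
\]
a rank-one matrix with vanishing determinant for $d\ge 2$. Thus the heuristic ``the Hessian scales like $|\xi|^{\gamma-2}$ in each of the $d$ variables, producing $|\xi|^{-d(\gamma-2)/2}$ from stationary phase'' does not apply: standard stationary phase gives nothing here. Moreover, interior critical points of $(2|\xi|_1+d)^\gamma+x\cdot\xi$ exist only when all $|x_j|$ are equal, so for generic $x$ the analysis would have to come from non-stationary integration by parts together with boundary contributions at the orthant faces, which is considerably more delicate than what you outline. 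A dyadic decomposition combined with the Plancherel/weighted-$L^2$ estimate (rather than stationary phase) on each piece would work and is essentially a discrete version of the paper's subordination argument; but the subordination trick is cleaner and gets you the exact threshold $\beta/(d\gamma)$ with almost no bookkeeping.
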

To prove this proposition we need the following technical lemma.  We will prove this lemma at the end.
\begin{lemma} \label{tl} Let $\sigma>0,  \gamma>0$ and
\begin{eqnarray}\label{k}
k_\sigma(x) = \frac{1}{(2\pi)^{d/2}}\int_{\mathbb{R}^d} e^{i(2|\xi|_1+d)^\gamma} e^{-\sigma (2|\xi|_1+d)^\gamma} e^{ix \cdot \xi}\,d\xi \ (x\in \mathbb R^d).
\end{eqnarray}
Then
 \begin{equation*}
\|k_\sigma\|_{L^1}\lesssim \sigma^{-d/2}e^{-\frac{1}{2}\sigma d^\gamma}.
\end{equation*}
\end{lemma}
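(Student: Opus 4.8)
The plan is to regard $k_\sigma$ as (a constant times) the inverse Fourier transform of the symbol
$$g_\sigma(\xi) = e^{(i-\sigma)(2|\xi|_1+d)^\gamma},$$
which is absolutely integrable for every $\sigma>0$ because $\gamma>0$ forces $e^{-\sigma(2|\xi|_1+d)^\gamma}$ to decay. The strategy is to estimate $\|k_\sigma\|_{L^1}$ by a Cauchy--Schwarz inequality against a $\sigma$-adapted weight, transfer the resulting weighted $L^2$ norm to the frequency side by Plancherel, and evaluate it through the radial-in-$\ell^1$ substitution $w=2|\xi|_1+d$. The key structural facts I would exploit are that the integrand depends on $\xi$ only through $|\xi|_1=\sum_j|\xi_j|$ and that $2|\xi|_1+d\ge d$; the latter is what produces the decaying factor $e^{-\frac12\sigma d^\gamma}$, while the former drives every one-dimensional computation.

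Concretely, fix an exponent $s\in(\tfrac12,\tfrac32)$ and a scale $R>0$ to be chosen, and use the anisotropic weight $W_R(x)=\prod_{j=1}^d(1+R^2x_j^2)^{s/2}$. Then
$$\|k_\sigma\|_{L^1}\le \|W_R^{-1}\|_{L^2}\,\|W_R\,k_\sigma\|_{L^2},\qquad \|W_R^{-1}\|_{L^2}=C_s\,R^{-d/2},$$
where finiteness of $\|W_R^{-1}\|_{L^2}$ is exactly the requirement $s>\tfrac12$. By Plancherel, $\|W_R k_\sigma\|_{L^2}=\|\prod_{j}(1-R^2\partial_{\xi_j}^2)^{s/2} g_\sigma\|_{L^2}$, an operator of order $s$ in each variable; keeping $s<\tfrac32$ is what makes this finite in spite of the corner of $\xi\mapsto(2|\xi|_1+d)^\gamma$ along the hyperplanes $\{\xi_j=0\}$. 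I would control this norm by its two extreme contributions, namely $\|g_\sigma\|_{L^2}$ and the top-order term $R^{ds}$ times the $L^2$ norm of the $s$-fold derivatives, each of the schematic form $(1+\sigma)^{ds}(2|\xi|_1+d)^{(\gamma-1)ds}g_\sigma$.

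Both contributions are then evaluated by the substitution $w=2|\xi|_1+d$: using $\int_{\R^d}(2|\xi|_1+d)^{2\mu}e^{-2\sigma(2|\xi|_1+d)^\gamma}\,d\xi \lesssim e^{-\sigma d^\gamma}\,\sigma^{-(2\mu+d)/\gamma}$ (the factor $e^{-\sigma d^\gamma}$ coming from $(2|\xi|_1+d)^\gamma\ge d^\gamma$ in one of the two exponentials), one gets $\|g_\sigma\|_{L^2}\lesssim \sigma^{-d/(2\gamma)}e^{-\frac12\sigma d^\gamma}$ and a top term of size $R^{ds}\sigma^{-d/(2\gamma)-(\gamma-1)ds/\gamma}e^{-\frac12\sigma d^\gamma}$. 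Balancing the two bracketed terms forces the choice $R\sim \sigma^{(\gamma-1)/\gamma}$, whereupon $R^{-d/2}$ contributes $\sigma^{-(\gamma-1)d/(2\gamma)}$ and, after multiplying by $\sigma^{-d/(2\gamma)}$, the powers of $\sigma$ combine to $\sigma^{-d/2}$ with the $\gamma$-dependence cancelling exactly. The surviving $e^{-\frac12\sigma d^\gamma}$ (carried by the $L^2$ factor only, since $\|W_R^{-1}\|_{L^2}$ is $\sigma$-free) then yields the claimed bound $\|k_\sigma\|_{L^1}\lesssim \sigma^{-d/2}e^{-\frac12\sigma d^\gamma}$.

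The main obstacle is the third step: rigorously estimating $\|\prod_j(1-R^2\partial_{\xi_j}^2)^{s/2}g_\sigma\|_{L^2}$, i.e.\ controlling the interaction of the fractional operators with the singularity of $(2|\xi|_1+d)^\gamma$ across each hyperplane $\{\xi_j=0\}$. Because $g_\sigma$ is Lipschitz but has a jump in its first $\xi_j$-derivative there, its per-variable Sobolev regularity stops at order $\tfrac32$; this is precisely why the argument cannot use the naive isotropic weight $(1+|x|^2)^{N}$ with $N\ge 1$ (whose dual second derivatives would generate non-$L^2$ Dirac masses $\delta(\xi_j)$) and why the admissible window $\tfrac12<s<\tfrac32$ must be respected. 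I expect the cleanest way to make this estimate precise is to treat one coordinate at a time, writing the $s$-th order part via the Lipschitz-singularity bound for $|\xi_j|$ and Leibniz-type expansions, so that only the benign growth factors $(2|\xi|_1+d)^{(\gamma-1)ds}$ and the harmless $\mathrm{sgn}$-jumps remain, both of which are absorbed by the substitution computation above.
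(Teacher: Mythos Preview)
Your approach is essentially the paper's: Cauchy--Schwarz against a scaled weight, Plancherel to convert the weighted $L^2$ norm of $k_\sigma$ into derivative bounds on $g_\sigma(\xi)=e^{(i-\sigma)(2|\xi|_1+d)^\gamma}$, the substitution $w=2|\xi|_1+d$ to evaluate those, and the balancing choice $R\sim\sigma^{(\gamma-1)/\gamma}$ (the paper writes $R=\sigma^{(1-\gamma)/\gamma}$, the reciprocal of yours, because its splitting inequality $\|h\|_{L^1}\lesssim R^{d/2}(\|h\|_{L^2}+R^{-l}\||x|^l h\|_{L^2})$ places the scale on the other side). The one genuine difference is the weight: the paper takes the isotropic $|x|^l$ with integer $l>d/2$ and simply asserts the pointwise estimate $|D_\xi^\alpha g_\sigma(\xi)|\lesssim (2|\xi|_1+d)^{-l(1-\gamma)}e^{-\frac12\sigma(2|\xi|_1+d)^\gamma}e^{-\frac12\sigma d^\gamma}$ for $|\alpha|=l$, not commenting on the fact that for $d\ge 2$ this forces $\alpha_j\ge 2$ for some $j$ and hence a $\delta(\xi_j)$ contribution. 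Your anisotropic fractional weight $\prod_j(1+R^2x_j^2)^{s/2}$ with $\tfrac12<s<\tfrac32$ is precisely the device that makes this step honest while keeping the same scaling arithmetic; it buys rigor at the corners of $|\xi|_1$ at the cost of having to justify the fractional Leibniz/Lipschitz estimate you identify as the remaining obstacle.
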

\begin{proof}[Proof of Proposition \ref{lp}] 
Performing a simple change of variables in the gamma function, we write
\begin{eqnarray}\label{gf}
(2|\xi|_1+d)^{-\beta} = \frac{1}{\Gamma(\beta/\gamma)}\int_0^\infty \sigma^{\frac{\beta}{\gamma}-1}\exp(-\sigma(2|\xi|_1+d)^\gamma)\,d\sigma.
\end{eqnarray}
In view of  Definition \ref{fme}, \eqref{ncm} and \eqref{gf}, we write
\begin{eqnarray}\label{ms}
T_mf(x) & = & (2\pi)^{-d/2} \int_{\mathbb R^d} m(\xi) \hat{f}(\xi) e^{i x\cdot \xi} d\xi \nonumber \\
& = &  \frac{1}{\Gamma(\beta/\gamma)}\int_0^\infty \sigma^{(\beta/\gamma)-1}(k_\sigma\ast f)(x)\,d\sigma
\end{eqnarray}
where $k_\sigma$ is as defined in Lemma \ref{tl}. 
Using  \eqref{ms}, we have \begin{equation}\label{eq8}
\|T_mf\|_{L^p}\leqslant \frac{1}{\Gamma(\beta/\gamma)}\int_0^\infty \sigma^{(\beta/\gamma)-1}\|k_\sigma\ast f\|_{L^p}\,d\sigma.\end{equation}
The boundedness of the multiplier operator  $T_mf$ on $L^p$ will follow if we could show that the operator $ f \rightarrow k_\sigma \ast f$ is bounded on $ L^p.$ We shall achieve this by using  Riesz-Thorin interpolation theorem  and the  standard  duality argument. To this end, we use Lemma \ref{tl} and Young's inequality, to obtain
 \begin{equation}\label{b1}\|k_\sigma\ast f\|_{L^1}\lesssim \sigma^{-d/2}e^{-\frac{1}{2}\sigma d^\gamma}\|f\|_{L^1}.\end{equation}
 Since  $|\hat{k}_\sigma(\xi)|\leqslant e^{-\frac{1}{2}\sigma d^\gamma}$,  Plancherel theorem gives
\begin{equation}\label{b2}   
 \|k_\sigma\ast f\|_{L^2}\leqslant e^{-\frac{1}{2}\sigma d^\gamma}\|f\|_{L^2}. 
\end{equation}
Taking  \eqref{b1}, and \eqref{b2} into  account,  Riesz-Thorin interpolation theorem gives \begin{equation}\label{eq7}\|k_\sigma\ast f\|_{L^p}\leqslant C_1 \sigma^{-\lambda} e^{-\frac{1}{2} \sigma d^\gamma}\|f\|_{L^p}  \end{equation} where $\lambda = d\left(\frac{1}{p}-\frac{1}{2}\right)$, for $1\leq p \leq 2.$
Finally using \eqref{eq7} and \eqref{eq8}, we see that $T_m$ is bounded from $L^p(\mathbb{R}^d)\rightarrow L^p(\mathbb{R}^d)$ if \[\int_0^\infty \sigma^{\beta/\gamma-1-\lambda}e^{-\frac{1}{2}\sigma d^\gamma}\,d\sigma <\infty,\] which happens if and only if $\frac{1}{p}-\frac{1}{2}<\frac{\beta}{d\gamma}$. This proves the theorem for the case when $1<p<2$ and the case $p\geqslant 2$ follows from the duality.
\end{proof}
Now we shall prove our  Lemma \ref{tl}.
\begin{proof}[Proof of Lemma \ref{tl}] Since  $k_{\sigma}$  (see \eqref{k}) is the inverse Fourier transform of the function $\exp{ ( (i-\sigma) (2|\xi|_1+d)^{\gamma})}$, we have 
\begin{eqnarray}\label{fk}
\widehat{k_{\sigma}}(\xi)=e^{ (i-\sigma) (2|\xi|_1+d)^{\gamma}}.
\end{eqnarray} 
Let $\alpha = (\alpha_1,\cdots,\alpha_d) \in \mathbb N^d$ be a multi-index of length $l$, that is, $\sum_{j=1}^d \alpha_j=l$, and put $D^{\alpha}_{\xi}= \left(\frac{\partial}{\partial \xi_1} \right)^{\alpha_1}\cdots \left(\frac{\partial}{\partial \xi_d} \right)^{\alpha_d}, \  \  \xi= (\xi_1,..., \xi_d)\in \mathbb R^d.$ Then in view of \eqref{fk}, we have \begin{equation}\label{eq2}
|D^\alpha_\xi\widehat{k_\sigma}(\xi)|\lesssim (2|\xi|_1+d)^{-l(1-\gamma)}e^{-\frac{1}{2}\sigma(2|\xi|_1+d)^\gamma}e^{-\frac{1}{2}\sigma d^\gamma}.
\end{equation} 
To obtain inequality \eqref{eq2}, we  have used these ideas:  After taking partial derivatives of higher order for $\widehat{k_{\sigma}}$, we have estimated all the powers of $(2|\xi|_1+d)$ by the highest appearing power$-$which is $(2|\xi|_1+d)^{-l(1-\gamma)}$, this we could do because $(2|\xi|_1+d)\geqslant 1$.  We have also  dominated $e^{-\frac{1}{2}\sigma(2|\xi|_1+d)^\gamma}$ by $e^{-\frac{1}{2}\sigma d^\gamma}$, which is obvious.\\
Next, by Plancherel's theorem and \eqref{eq2}, we get\[\||x|^lk_\sigma\|_{L^2}\lesssim \left(\int_{\mathbb{R}^d}(2|\xi|_1+d)^{-2l(1-\gamma)}e^{-\sigma (2|\xi|_1+d)^\gamma}\,d\xi\right)^{\frac{1}{2}}e^{-\frac{1}{2}\sigma d^\gamma}.\] Performing  a change of variable,  we get \[\||x|^lk_\sigma\|_{L^2}\lesssim \sigma^{-\frac{d}{2\gamma}+\frac{l(1-\gamma)}{\gamma}}e^{-\frac{1}{2}\sigma d^\gamma}\left(\int_{\mathbb{R}^d}g(2|\xi|_1+\sigma^{1/\gamma}d)\,d\xi\right)^{1/2},\] where $g(t) = t^{-2l(1-\gamma)}e^{-t^\gamma} \ (t>0)$. Noting that $\int g(2|\xi|_1+\sigma^{1/\gamma}d)\,d\xi<\infty$ for all $\gamma$ and $l$, we conclude \[\||x|^lk_\sigma\|_{L^2}\lesssim\sigma^{-\frac{d}{2\gamma}+\frac{l(1-\gamma)}{\gamma}}e^{-\frac{1}{2}\sigma d^\gamma}.\] Next we use the fact that for $h\in L^2(\mathbb{R}^d)\cap L^1(\mathbb{R}^d)$, $l>d/2$ and $R>0$, we have \[\|h\|_{L^1}\lesssim  R^{d/2}(\|h\|_{L^2}+R^{-l}\||x|^l h\|_{L^2})\] for all $R>0$. Taking $h = k_\sigma$ and $R = \sigma^{\frac{1-\gamma}{\gamma}}$, we obtain
\[\|k_\sigma\|_{L^1}\leqslant C\sigma^{-d/2}e^{-\frac{1}{2}\sigma d^\gamma}.\]  This completes the proof.
\end{proof}
\noindent
We now use Proposition \ref{lp} and Theorem  \ref{dl} to obtain the following corollary.
\begin{corollary} \label{lc} Let $\left |\frac{1}{p}- \frac{1}{2}\right |< \frac{\beta}{d \,\gamma },  \  \beta>0, \gamma>0, \  |\alpha|_1= \sum_{j=1}^d|\alpha_j|$.  Then the  sequence  $\{(2|\alpha|_1+d)^{-\beta}\exp(i(2|\alpha|_1+d)^\gamma)\}_{\alpha\in\mathbb{Z}^d}$ defines  a   multiplier on $L^p(\mathbb{T}^d)$.
\end{corollary}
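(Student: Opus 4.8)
The plan is to read off the corollary as an immediate consequence of Proposition \ref{lp} together with de Leeuw's theorem (Theorem \ref{dl}). First I would apply Proposition \ref{lp} with the given $\beta>0$ and $\gamma>0$: under the hypothesis $\left|\frac1p-\frac12\right|<\frac{\beta}{d\gamma}$, the function
$$ m(\xi)=\frac{e^{i(2|\xi|_1+d)^\gamma}}{(2|\xi|_1+d)^\beta},\qquad \xi\in\mathbb R^d, $$
is a Fourier multiplier on $L^p(\mathbb R^d)$ in the sense of Definition \ref{fme}. This is the only step carrying genuine analytic content, and it is already established.

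In order to invoke de Leeuw's theorem I must verify the one remaining hypothesis, namely that $m$ is continuous on all of $\mathbb R^d$. This is where I would pause, but only briefly: the map $\xi\mapsto|\xi|_1=\sum_{j=1}^d|\xi_j|$ is continuous, and since $2|\xi|_1+d\geq d>0$ for every $\xi$, neither the argument of the exponential nor the base of the power ever meets a singularity. Consequently both $\xi\mapsto (2|\xi|_1+d)^{-\beta}$ and $\xi\mapsto e^{i(2|\xi|_1+d)^\gamma}$ are continuous, and so is their product $m$. I would emphasize that this is precisely the point at which our situation is simpler than the multiplier studied in \cite{tps}: there the symbol $|\xi|^{-\beta}e^{i|\xi|^\alpha}$ is singular at the origin and must be truncated away from it, whereas here the shift by $d$ removes the singularity outright, so no cut-off is needed and global continuity holds automatically.

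With continuity in hand, Theorem \ref{dl} applies directly and yields that the restriction $m|_{\mathbb Z^d}$ is a Fourier multiplier on $L^p(\mathbb T^d)$. Finally I would simply evaluate this restriction: for $\alpha\in\mathbb Z^d$ one has
$$ m|_{\mathbb Z^d}(\alpha)=(2|\alpha|_1+d)^{-\beta}\exp\!\big(i(2|\alpha|_1+d)^\gamma\big), $$
which is exactly the sequence appearing in the statement. Hence this sequence defines a multiplier on $L^p(\mathbb T^d)$, completing the proof. There is no serious obstacle to overcome at this stage: beyond the trivial continuity check, the corollary is a direct appeal to the transference provided by de Leeuw's theorem, all the real work having been carried out in Proposition \ref{lp}.
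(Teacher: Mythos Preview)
Your proposal is correct and follows exactly the approach the paper indicates: the paper simply states that Corollary \ref{lc} is obtained by combining Proposition \ref{lp} with de Leeuw's theorem (Theorem \ref{dl}), and your argument spells this out, including the easy but necessary continuity check that the paper leaves implicit.
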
 
\begin{proof}[Proof of Theorem \ref{HMT}] Using Corollary \ref{lc} and Proposition \ref{lp}, we  may deduce that $m(H)$ is bounded on $M^{p,p}(\mathbb R^d).$ This completes the proof.
\end{proof}

\section{Hermite multiplier for Schr\"odinger propagator}\label{hmsos}
In this section, we prove the boundedness of Schr\"odinger propagator $m(H)=e^{itH}$  using the 
properties of Hermite and special Hermite functions. Our approach of proof illustrates  how these functions nicely fit into  modulation spaces$-$and prove useful estimate. 

\begin{proof}[Proof of Theorem \ref{mso}] Let $f\in \mathcal{S}(\mathbb R^d).$
Then we have the  Hermite expansion of  $f$ as follows:
\begin{eqnarray}\label{he}
f= \sum_{\alpha \in \mathbb N^d} \langle f, \Phi_{\alpha}\rangle \Phi_{\alpha}.
\end{eqnarray} 
Now using \eqref{he} and \eqref{SHF}, we obtain
\begin{eqnarray}\label{pihe}
\langle \pi(z)f, \Phi_0 \rangle & =  & \sum_{\alpha \in \mathbb N^d}\langle f, \Phi_{\alpha} \rangle \langle \pi(z)\Phi_{\alpha}, \Phi_{0} \rangle \nonumber\\
& = & \sum_{\alpha \in \mathbb N^d} \langle f, \Phi_{\alpha} \rangle \Phi_{\alpha, 0}(z).
\end{eqnarray}
Since $\{\Phi_{\alpha} \}$ forms an orthonormal basis for $L^{2}(\mathbb R^d),$ \eqref{pihe} gives
\begin{eqnarray}\label{ff}
\langle \pi(z)m(H)f, \Phi_0  \rangle  & = &  \sum_{\alpha \in \mathbb N^d} \langle m(H)f, \Phi_{\alpha} \rangle \Phi_{\alpha, 0}(z) \nonumber \\
& = & \sum_{\alpha \in \mathbb N^d} m(2|\alpha| +d) \langle f, \Phi_{\alpha} \rangle \Phi_{\alpha, 0}(z).\nonumber
\end{eqnarray}
Therefore, for $m(H) = e^{itH}$, we have \begin{align}\label{1}
\langle \pi(z)e^{itH}f, \Phi_0  \rangle  &= e^{itd} \sum_{\alpha \in \mathbb N^d} e^{2it|\alpha|} \langle f, \Phi_{\alpha} \rangle \Phi_{\alpha, 0}(z) \nonumber \\
&= e^{itd} (2\pi)^{-d/2}  \sum_{\alpha \in \mathbb N^d} e^{2it|\alpha|} \langle f, \Phi_{\alpha} \rangle  (\alpha !)^{-1/2} \left(\frac{i}{\sqrt{2}} \right)^{|\alpha|}\bar{z}^{\alpha} e^{-\frac{1}{4} |z|^2}.
\end{align}
In view of \eqref{ui} and \eqref{1}, we have 
\begin{align}\label{o}
\|e^{itH}f\|^p_{M^{p,p}}&=  \frac{1}{(2\pi)^{d/2}}\int_{\mathbb{C}^d}  \left |  \sum_{\alpha \in \mathbb N^d} e^{2it|\alpha|} \langle f, \Phi_{\alpha} \rangle  (\alpha !)^{-1/2} \left(\frac{i}{\sqrt{2}} \right)^{|\alpha|}\bar{z}^{\alpha} e^{-\frac{1}{4} |z|^2}\, \right |^p dz.
\end{align} 
\noindent
Using  polar coordinates as above (see \eqref{LPC}), we have
\begin{eqnarray}\label{h} \int_{\mathbb{C}^d}  \left |  \sum_{\alpha \in \mathbb N^d} e^{2it|\alpha|} \langle f, \Phi_{\alpha} \rangle  (\alpha !)^{-1/2} \left(\frac{i}{\sqrt{2}} \right)^{|\alpha|}\bar{z}^{\alpha} e^{-\frac{1}{4} |z|^2}\, \right |^p dz&\\
=\prod_{j=1}^d
 \int_{\mathbb{R}^+}\int_{[0,2\pi]} \left|\sum_{\alpha  \in \mathbb N^d } \langle f, \Phi_{\alpha} \rangle  (\alpha !)^{-1/2} \left(\frac{i}{\sqrt{2}} \right)^{|\alpha |}  r^{\alpha}  e^{i \sum_{j=1}^d(2t-\theta_j) \alpha_j} e^{-\frac{1}{4} |r|^2} \right |^p r_j  dr_j d\theta_j.  \nonumber 
\end{eqnarray}
By a simple change of variable $(\theta_j-2t)\rightarrow \theta_j$, we obtain
\begin{eqnarray}\label{b}  \prod_{j=1}^d
 \int_{\mathbb{R}^+}\int_{[0,2\pi]} \left|\sum_{\alpha \in \mathbb N^d } \langle f, \Phi_{\alpha} \rangle  (\alpha !)^{-1/2} \left(\frac{i}{\sqrt{2}} \right)^{ |\alpha |}  r^{\alpha} e^{i \sum_{j=1}^d(2t-\theta_j) \alpha_j} e^{-\frac{1}{4} |r|^2} \right |^p r_j  dr_j d\theta_j  \\
 = \prod_{j=1}^d \int_{\mathbb{R}^+}\int_{[0,2\pi]} \left|\sum_{\alpha \in \mathbb N^d } \langle f, \Phi_{\alpha} \rangle  (\alpha !)^{-1/2} \left(\frac{i}{\sqrt{2}} \right)^{|\alpha |}  r^{\alpha} e^{-i \theta \cdot \alpha} e^{-\frac{1}{4} |r|^2} \right |^p r_j  dr_j d\theta_j . \nonumber
\end{eqnarray}
Combining  \eqref{o}, \eqref{h},  \eqref{b}, and Lemma \ref{pl}\eqref{d},  we have $\|e^{itH}f \|_{M^{p,p}} = \|f\|_{M^{p,p}}$ for $f\in M^{p,p}(\mathbb R^d).$ This completes the proof of Theorem \ref{mso}.
\end{proof}
\noindent
\textbf{Acknowledgment}.  The work leading to this article began  while DGB  was a project  assistant with Professor  Thangavelu in IISc.  DGB is very grateful to  Professor Thangavelu for providing the funding  and arranging research facilities  during his stay at IISc.  DGB is also thankful to DST-INSPIRE and TIFR CAM for the current support. RB  wishes to thank UGC-CSIR for financial support.  ST is supported by J C Bose National Fellowship from D.S.T., Govt. of India.

\end{document}